\journal{Neurocomputing}
\newcommand\mypar[1]{\medskip\noindent\textbf{#1}}
\newtheorem{theorem}{Theorem}[section]
\newtheorem{definition}[theorem]{Definition}
\newtheorem{remark}[theorem]{Remark}
\newtheorem{conjecture}[theorem]{Conjecture}
\def\td{\tilde{D}}
\newcommand\rev[1]{{{#1}}}
\begin{document}

% Title and Author Information
\title{Worst-case convergence analysis of \\ relatively inexact gradient descent \\on smooth convex functions}
\author[inst1]{Pierre Vernimmen\corref{cor1}\fnref{fn1}}\author[inst1,inst2]{François Glineur}
\affiliation[inst1]{organization={UCLouvain, ICTEAM (INMA)}, 
            addressline={Avenue Georges Lemaître 4}, 
            city={Louvain-la-Neuve}, 
            postcode={1348}, 
            country={Belgium}}
\affiliation[inst2]{organization={UCLouvain, CORE}, 
               addressline={Voie du Roman Pays 34},
               city={Louvain-la-Neuve}, 
               postcode={1348}, 
               country={Belgium}}
\cortext[cor1]{Corresponding author. Email: pierre.vernimmen@uclouvain.be}
\fntext[fn1]{The first author is funded by the UCLouvain \emph{Fonds spéciaux de Recherche} (FSR).}

\begin{abstract}
We consider the classical gradient descent algorithm with constant stepsizes, where some error is introduced in the computation of each gradient. More specifically, we assume some relative bound on the inexactness in the sense that the norm of the difference between the true gradient and its approximate value is bounded by a certain fraction of the gradient norm. This paper presents a worst-case convergence analysis of this so-called relatively inexact gradient descent on smooth convex functions, using the Performance Estimation Problem (PEP) framework. We first derive the exact worst-case behavior of the method after one step. Then we study the case of several steps and provide computable upper and lower bounds using the PEP framework. Finally, we discuss the optimal choice of constant stepsize according to the obtained worst-case convergence rates.
\end{abstract}

\begin{keyword}
Gradient Descent, Inexact Gradient, Smooth Convex Optimization, Performance Estimation, Robustness
\end{keyword}

\maketitle
\clearpage

% temp during writing
\tableofcontents

\section{Introduction}\label{sec::Introduction}

\rev{\subsection{Problem Setup}

We consider the standard optimization problem of unconstrained minimization of a differentiable function:
\[
    \min_{x \in \mathbb{R}^d} f(x),
\]
where \(f : \mathbb{R}^d \to \mathbb{R}\) is assumed to be convex and \(L\)-smooth, that is
\[
    f(y) \geq f(x) + \langle \nabla f(x), y - x \rangle, \quad \forall x,y \in \mathbb{R}^d,
\]
and
\[
    \|\nabla f(x) - \nabla f(y)\| \leq L \|x-y\|, \quad \forall x,y \in \mathbb{R}^d
\]
(i.e.\@ its gradient is $L$-Lipschitz). We assume that \(f\) admits at least one minimizer, denoted by \(x^\star \in \arg\min f\).
The goal of this paper is to analyze first-order methods when encountering inexactness, and in particular Inexact Gradient Descent, for solving this problem.

\medskip
\noindent
\textbf{Remark.} The step-size in gradient-based methods typically depends on the smoothness constant \(L\).
In many applications, this constant can be computed exactly (for example when the Hessian is available),
while in others it can be estimated or bounded (see e.g. \cite{fazlyab2019efficient}).
Throughout this paper, we assume that an admissible value of the smoothness constant \(L\) is available.}

\subsection{First-order methods, relative inexactness and worst-case analysis}

\mypar{First-order methods and their practical relevance.}
First-order optimization methods, and in particular gradient-based techniques, are widely used in modern large-scale optimization. Their low per-iteration complexity and favorable convergence guarantees make them especially well-suited to machine learning and signal processing applications \cite{lan2020first}. Among these methods, the gradient descent with a constant stepsize stands out due to its simplicity, robustness, and effectiveness, in particular for smooth convex problems \cite{nesterov2013introductory}.

\mypar{Motivations for inexact gradient computations.}
In many practical situations, exact gradient computations are not available or too costly. For example, inexact gradients naturally arise when computations are performed with limited precision, as in floating point arithmetic, or when the gradient is computed approximately, e.g., through sampling, simulations, or inner optimization loops. These sources of inexactness can significantly affect the performance of gradient-based methods and are thus central to both theoretical analysis and practical algorithm design.

Inexactness may also be imposed intentionally to save computational resources, for example, by quantizing gradients or using less expensive gradient surrogates. In large-scale machine learning systems or distributed environments, robustness to such approximations becomes crucial, and understanding their impact on convergence is essential for reliable algorithmic implementation. This is especially true in modern hardware environments, where low-precision computations are often preferred to accelerate training or inference (see e.g. the discussion in \cite{vernimmen2025empirical}).

\mypar{Different models of inexactness.}
Several models of inexactness have been studied in the literature. A common approach assumes that the approximate gradient differs from the true one by an error with bounded norm, leading to an \emph{absolute} error model, as studied, for instance, in \cite{doi:10.1137/060676386}. In \cite{devolder2014first}, another notion of inexact gradient is developed, based on the maximal error incurred by the corresponding quadratic upper bound. In these frameworks, the norm of the additive error is independent of the magnitude of the gradient.

More recently, a \emph{relative} model of inexactness has attracted attention, in which the norm of the gradient error is bounded proportionally to the true gradient norm. Mathematically, let $d_k$ be the approximate gradient at $x_k$, it is then said to be a $\delta$-relatively inexact gradient if $\|d_k - \nabla f(x_k)\|\leq \delta \|\nabla f(x_k)\|$, where $\delta \in [0,1)$ is a parameter controlling inexactness. This model is particularly relevant when the gradient norm itself indicates proximity to optimality: more accurate gradients are naturally required near the solution. It also includes, as special cases, errors arising from quantization or low-precision computation, as studied in \cite{kempke2025low} in the context of mixed-integer linear programming. The relative error model thus often offers a more flexible and realistic description of inexactness in practical applications.

\mypar{Previous work.}  
This notion of relative inexactness is used in  \cite{de2017worst, de2020worst} in the context of convergence analysis, where the Performance Estimation Problem (PEP) framework \cite{drori2014performance, taylor2017smooth} is used to provide tight convergence guarantees for the inexact gradient descent on \(L\)-smooth, \(\mu\)-strongly convex functions. For example, \cite[Theorem 5.3]{de2020worst} establishes a tight linear convergence rate for gradient descent with constant stepsize \(h\) and relative inexactness \(\delta < \frac{2\mu}{L+\mu}\). This analysis is however restricted to the strongly convex setting, and the results become ineffective as \(\mu \to 0\), for the smooth convex case.

Subsequent works, such as \cite{vasin2024gradient}, \cite{vasin2023accelerated}, \cite{kornilov2023intermediate}, and \cite{vasin2025solving} extend the study of relative inexactness to several first-order and accelerated methods using Lyapunov-based analyses. In particular, \cite{vasin2024gradient} shows that the relatively inexact gradient descent applied to smooth convex functions converges to a minimizer for sufficiently small constant stepsizes. While the approaches from the above papers cover more algorithmic variants, they typically yield conservative and sometimes loose upper bounds (both for rates and allowed stepsizes), which may limit their usefulness.

Other related research with relative errors includes probabilistic relative error models \cite{hallakstudy}, momentum-based schemes interpreted as inexact gradient descent \cite{khanh2025convergence}, proximal methods under relative gradient errors \cite{bello2025relative}, robustness to noise in distributed settings \cite{wang2024robust}, and heuristic stopping criteria under gradient noise \cite{vasin2021stopping}. Complementary to these, \cite{thomsen2025tight} analyzes error-feedback schemes with compressed gradients, providing tight worst-case guarantees.

\subsection{Contributions}
Our main contribution is to extend the PEP-based analysis of \cite{de2020worst} beyond the strongly convex case to smooth convex functions, providing some tight convergence guarantees for relatively inexact gradient descent in this broader setting. This journal article is an extension of our conference paper \cite{vernimmen2024convergence}, which initiated the study of relatively inexact gradient descent in the convex setting using the PEP methodology, but left the analysis incomplete. Numerical experiments are presented in a separate companion paper \cite{vernimmen2025empirical}, by the same authors.

\mypar{Relatively inexact gradient descent.} More specifically, in this paper, we will provide an improved, and, in some cases, tight convergence analysis of the following relatively inexact gradient descent:

\begin{algorithm}[H]
\caption{Relatively Inexact Gradient Descent with constant (normalized) stepsize $h$}
\label{algo::inexact_gradient}
\begin{algorithmic}[1]
\State Given an $L$-smooth convex function $f(\cdot)$, a starting iterate $x_0$, a stepsize $h$ and a inexactness level $\delta \in [0,1)$
\State $k \gets 0$
\While{$k < \text{max\_iterations}$}
    \State Let $d_k$ be an approximate gradient at $x_k$ with $\delta$ relative inexactness, i.e. such that
    \begin{equation} \label{ine} \|d_k - \nabla f(x_k)\|\leq \delta \|\nabla f(x_k)\| \end{equation}
    \State Compute the next iterate as $x_{k+1} = x_k - \frac{h}{L}d_k$
    \State $k \gets k + 1$
\EndWhile
\end{algorithmic}
\end{algorithm}

Our analysis demonstrates that Algorithm \ref{algo::inexact_gradient} enjoys guaranteed convergence towards a minimizer of the function for any level of relative inexactness $\delta \in [0,1)$, provided the stepsize is well chosen. This result is significant, as it shows that gradient descent can be robust to large relative inexactness levels. Specifically, we show that the method converges for any stepsize $h\in [0,h_{\text{max}}]$, with $h_{\text{max}}=\frac{2}{1+\delta}$, and is no longer convergent for $h > h_\text{max}$. This tight bound on acceptable stepsizes improves the previously known result from \cite{vasin2024gradient}.

\mypar{Type of convergence analysis.} In the previously studied smooth strongly convex setting, linear convergence rates were established on the objective accuracy, the gradient norm, and the distance to the solution (see Theorems 5.1 and 5.4 in \cite{de2020worst}). However, in the smooth convex case, it is well known that linear convergence cannot hold. Instead, one has to resort to studying convergence rates based on two distinct criteria, characterizing respectively the starting iterate and the last iterate. The perhaps most classical type of rate in the smooth convex setting bounds the objective accuracy in terms of the initial distance to the solution, see e.g. \cite[Corollary 2.1.2]{nesterov2013introductory}.

\noindent However, in this paper, we will prove results of the following type:
\begin{equation}
\tfrac{1}{L} \| \nabla f(x_N) \|^2 \le \sigma_N \bigl( f(x_0) - f(x_*) \bigr).
\end{equation}
i.e., we bound the (squared) gradient norm in terms of the initial objective accuracy. Constant $\sigma_N$ describes the convergence rate after $N$ iterations. 
This type of rate in the gradient norm has also been studied in the exact case, see e.g. \cite{kim2021optimizing}, and also potentially allows generalization to nonconvex settings \cite{rotaru2024exact}.

\mypar{Main theorem.} In the exact case ($\delta=0$), it is well-known that the convergence rate $\sigma_N$ for smooth convex functions has order $\mathcal{O}(\frac{1}{N})$. More precisely, the following tight rate is proved in \cite[Theorem 2.1]{rotaru2024exact} (see the beginning of Section \ref{sec::one_step} for a complete proof for one iteration):

\begin{theorem}[\cite{rotaru2024exact}, Theorem 2.1]\label{theo::rotaru_exact}
    Algorithm \ref{algo::inexact_gradient} applied to a convex $L$-smooth function $f$ with a constant stepsize $h\in[0,2]$ and an exact gradient ($\delta=0$), started from iterate $x_0$, generates iterates satisfying 
    \begin{equation*}
        \tfrac{1}{L}\|\nabla f(x_N)\|^2\leq \max\left\{\frac{1}{Nh+\tfrac{1}{2}}, 2(1-h)^{2N}\right\}\left(f(x_0)-f(x_\ast)\right)
    \end{equation*}
\end{theorem}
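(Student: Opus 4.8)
The plan is to cast the problem in the Performance Estimation framework: the worst-case ratio $\frac{\|\nabla f(x_N)\|^2/L}{f(x_0)-f(x_\ast)}$ over all $L$-smooth convex $f$ is the optimal value of an optimization problem whose variables are the triples $(x_k,\nabla f(x_k),f(x_k))$ for $k\in\{0,\dots,N,\ast\}$. Using the interpolation conditions of Taylor et al. \cite{taylor2017smooth}, which state that such data are realizable by an $L$-smooth convex function if and only if, for every ordered pair $(i,j)$,
\[
 f_i \ge f_j + \langle g_j,\, x_i-x_j\rangle + \tfrac{1}{2L}\|g_i-g_j\|^2,
\]
together with the update constraints $x_{k+1}=x_k-\tfrac{h}{L}g_k$ and $g_\ast=0$, this becomes a (nonconvex) quadratically constrained problem that admits an exact semidefinite reformulation after a change of variables in the Gram matrix of the gradients and displacements.

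For the upper bound I would not rely on the naive descent argument: summing the smoothness inequality $f(x_k)-f(x_{k+1})\ge \tfrac{h}{L}(1-\tfrac h2)\|g_k\|^2$ and invoking monotonicity of the gradient norm only yields the weaker constant $\tfrac{2}{Nh(2-h)}$. Instead, the tight rate requires an explicit dual certificate: a set of nonnegative multipliers $\{\lambda_{ij}\}$ for the interpolation inequalities such that their weighted sum, after substituting the update rule, reduces the target inequality $\tfrac1L\|g_N\|^2-\sigma_N\,(f_0-f_\ast)\le 0$ to a manifestly nonnegative (sum-of-squares) combination of squared gradient and displacement terms. For the single-step case I would first guess these multipliers from the numerically solved PEP and verify the resulting algebraic identity by hand; this is the ``complete proof for one iteration'' referenced in the text. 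For general $N$ I would instead seek a Lyapunov/potential function of the form $\phi_k = a_k\,(f(x_k)-f_\ast)+b_k\,\tfrac1L\|g_k\|^2$ with explicit sequences $a_k,b_k$, prove the one-step decrease $\phi_{k+1}\le\phi_k$ using the interpolation inequalities, and match the boundary values so that $\phi_0\le f_0-f_\ast$ and $\phi_N\ge \tfrac1L\|g_N\|^2/\sigma_N$.

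The appearance of the maximum reflects two distinct worst-case regimes, so I expect a case analysis on $h$: one family of multipliers (or potential coefficients) yields the sublinear term $\tfrac{1}{Nh+1/2}$ and is active for moderate stepsizes, while a second yields the contraction term $2(1-h)^{2N}$ and becomes active for large $h$, the two coinciding at the stepsize where the arguments of the maximum are equal. For tightness I would exhibit matching worst-case functions: the one-dimensional quadratic $f(x)=\tfrac L2 x^2$, for which $\nabla f(x_k)=L(1-h)^k x_0$ gives exactly $\tfrac1L\|\nabla f(x_N)\|^2=2(1-h)^{2N}(f_0-f_\ast)$, certifies the second term, while a suitable piecewise-linear (Huber-type) function certifies the first.

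The principal obstacle will be obtaining the dual multipliers (or the potential coefficients $a_k,b_k$) in closed form as functions of $N$ and $h$, and verifying that the resulting combination is exactly a nonnegative quadratic form for every admissible $h$. In particular, showing that the telescoping identity closes with the precise constants $Nh+\tfrac12$ and $2(1-h)^{2N}$, and correctly locating the crossover between the two regimes, is the delicate algebraic step; the remainder is routine bookkeeping over the interpolation inequalities.
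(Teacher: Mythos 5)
Your one-iteration plan essentially coincides with the paper's own proof: the paper also only proves the $N=1$ case in full (Theorem \ref{th::exact_convergence_rate}), by combining the interpolation inequalities $Q_{01},Q_{10}$ with multipliers of the form $\lambda+1,\lambda$ (guessed from the structure of the PEP dual), verifying positive semidefiniteness of the residual quadratic form, and then adding $f_1-f_\ast\ge\tfrac{1}{2L}\|g_1\|^2$; for general $N$ it simply cites \cite{rotaru2024exact}. Your worst-case instances (quadratic for the geometric term, Huber for the sublinear term) are the correct ones, and your observation that the naive descent-plus-monotonicity argument only yields $2/(Nh(2-h))$ is accurate.

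The genuine gap is in your multi-step plan. A potential $\phi_k=a_k(f(x_k)-f_\ast)+b_k\tfrac1L\|g_k\|^2$ does work in the short-step regime (take $L=1$, $a_k=1$, $b_k=kh$: the decrease follows from the tight one-step bound $f_k-f_{k+1}\ge h\|g_{k+1}\|^2$, valid only for $h\le\tfrac32$, plus monotonicity of gradient norms), but no choice of sequences $a_k,b_k$ can certify the term $2(1-h)^{2N}$ in the long-step regime. Here is the obstruction. Since the quadratic $f(x)=\tfrac12x^2$ attains that rate exactly, every inequality in the chain $\tfrac{1}{\sigma_N}\|g_N\|^2\le\phi_N\le\cdots\le\phi_0\le f_0-f_\ast$ must be an equality along its trajectory, which forces $s_k:=\tfrac{a_k}{2}+b_k=\tfrac12(1-h)^{-2k}$, i.e.\ geometric growth of $s_k$. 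On the other hand, the one-step decrease must also hold for Huber-type configurations with $g_k=g_{k+1}=c$, $f_k-f_{k+1}=hc^2$, and $f_{k+1}-f_\ast=M\ge \tfrac{c^2}{2}$ (all reachable as iteration $k$ of an actual run on a Huber function started far away): letting $M\to\infty$ forces $a_{k+1}\le a_k\le a_0\le 1$, and taking $M=\tfrac{c^2}{2}$ forces $s_{k+1}\le s_k+a_kh\le s_k+h$, i.e.\ at most \emph{linear} growth of $s_k$. The two requirements are incompatible as soon as $\tfrac12(1-h)^{-2k}\bigl((1-h)^{-2}-1\bigr)>h$; for instance $h=1.75$ rules out every potential of your form once $N\ge4$. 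Moreover, for $h$ slightly above $\tfrac32$ the active term of the max is still $\tfrac{1}{Nh+1/2}$, but your per-step inequality $f_k-f_{k+1}\ge h\|g_{k+1}\|^2$ is false there (the tight per-step constant drops to $\tfrac12((1-h)^{-2}-1)$), so that range is not covered either.

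What your scheme actually delivers for $h>\tfrac32$ is the telescoped constant $2/\bigl(N(1-h)^{-2}-(N-1)\bigr)$, which is exactly the non-tight multi-step upper bound the paper itself proves (Theorem \ref{theorem::full_th_inexact_N_steps} at $\delta=0$) and is strictly larger than $\max\{\tfrac{1}{Nh+1/2},\,2(1-h)^{2N}\}$ whenever $h>\tfrac32$. This is precisely why the tight $N$-step statement is deferred to \cite{rotaru2024exact}: its proof requires a dual certificate coupling non-consecutive iterates (multipliers on interpolation inequalities between distant pairs), which cannot be compressed into a one-step Lyapunov argument of the form you propose.
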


\noindent The main theoretical result in this paper is Theorem \ref{theorem::full_th_inexact_N_steps}, which extends the above to situations where the gradient is relatively inexact.

\begingroup
\renewcommand\thetheorem{\ref{theorem::full_th_inexact_N_steps}}
\begin{theorem}
    Algorithm \ref{algo::inexact_gradient} applied to a convex $L$-smooth function $f$ with an inexact gradient with relative inexactness $\delta\in(0,1)$, 
    started from iterate $x_0$ with 
    a constant stepsize $h\in[0,\frac{2}{1+\delta}]$,  generates iterates satisfying 
    \[
        \tfrac{1}{L} \min_{k\in\{1,\cdots,N\}}\|\nabla f(x_k)\|^2 \leq \tilde{C}_N(h, \delta) (f(x_0) - f(x_\ast)),
    \]
    where $\tilde{C}_N(h, \delta)$ is given by the continuous function
    \[
        \tilde{C}_N(h, \delta) =
        \begin{cases}
            \frac{1}{Nh(1-\delta)+\tfrac{1}{2}}, & \text{if } h \in \left[0, \frac{3}{2(1+\delta)}\right), \\\frac{2\tilde{\lambda}}{N(h\tilde{\lambda}^2+2(h-1)\tilde{\lambda}+h-1)+\tilde{\lambda}}, & \text{if } h \in \left[\frac{3}{2(1+\delta)}, \frac{3\delta+2-\sqrt{4-3\delta^2}}{2\delta(\delta+1)}\right], \\
            \frac{2}{N((1-h(1+\delta))^{-2})-(N-1)}, & \text{if } h \in \left(\frac{3\delta+2-\sqrt{4-3\delta^2}}{2\delta(\delta+1)}, \frac{2}{1+\delta}\right],
        \end{cases}
    \]
    and $\tilde{\lambda}$ is the solution of a cubic equation (see Definition \ref{def::tilde_lambda}).
\end{theorem}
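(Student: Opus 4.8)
The plan is to establish this bound with the Performance Estimation Problem (PEP) framework, recasting the worst case as a finite-dimensional optimization over admissible trajectories and then exhibiting an explicit dual certificate. First I would record the inequalities that every run of Algorithm \ref{algo::inexact_gradient} must satisfy: writing $g_k=\nabla f(x_k)$, $f_k=f(x_k)$, $g_*=\nabla f(x_*)=0$, these are the $L$-smooth convex interpolation conditions $f_i\ge f_j+\langle g_j,x_i-x_j\rangle+\frac{1}{2L}\|g_i-g_j\|^2$ for all indices $i,j\in\{0,\dots,N,*\}$, the inexactness constraints $\|d_k-g_k\|^2\le\delta^2\|g_k\|^2$, and the dynamics $x_{k+1}=x_k-\frac{h}{L}d_k$. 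Substituting the dynamics eliminates the iterates and leaves a purely quadratic description in the vectors $g_k$, the errors $e_k=d_k-g_k$, and the scalar $f_0-f_*$.

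Since the quantity to bound is $\min_{1\le k\le N}\|g_k\|^2$, I would introduce weights $\gamma_k\ge0$ with $\sum_k\gamma_k=1$ and use $\min_k\|g_k\|^2\le\sum_k\gamma_k\|g_k\|^2$, reducing the task to bounding a weighted average of squared gradient norms. The upper bound then follows from a dual certificate: nonnegative multipliers $\lambda_{ij}$ for the interpolation inequalities and $\tau_k$ for the inexactness inequalities, chosen so that the aggregated inequality reads $\sum_k\gamma_k\|g_k\|^2-\tilde C_N(h,\delta)\,L\,(f_0-f_*)\le -Q$, where $Q$ is a sum of squares in the $g_k,e_k$ and hence nonnegative. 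Guessing the analytic form of these multipliers, guided by solving small PEP instances numerically, and then verifying the sum-of-squares property (equivalently, positive semidefiniteness of the residual Gram form) is the core of the argument. As a base case and consistency check, the one-step certificate derived at the start of Section \ref{sec::one_step} should be recovered for $N=1$, and the exact bound of Theorem \ref{theo::rotaru_exact} should emerge as $\delta\to0$.

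The certificate, and hence the closed form of $\tilde C_N$, changes with $h$ according to which constraints are active at the worst case, producing the three branches. For small $h$ the dominant mechanism is a corrupted descent of order $\frac{h(1-\delta)}{L}\|g_k\|^2$ telescoped against $f_0-f_*$, with the smoothness inequality $\frac{1}{2L}\|g_N\|^2\le f_N-f_*$ supplying the additive $\tfrac12$; combined with $\sum_k\gamma_k\|g_k\|^2\ge\min_k\|g_k\|^2$ this gives the first branch $\frac{1}{Nh(1-\delta)+\frac12}$. For $h$ near $h_{\max}=\frac{2}{1+\delta}$ the adversary effectively amplifies the step to $h(1+\delta)$, so an oscillatory/geometric mechanism with factor $1-h(1+\delta)$ governs the worst case and yields the third branch. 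The middle branch interpolates between these two regimes, and its free parameter $\tilde\lambda$ is pinned down by the stationarity conditions of the dual, which reduce to the cubic of Definition \ref{def::tilde_lambda}. I would finish by checking continuity of $\tilde C_N$ at the breakpoints $h=\frac{3}{2(1+\delta)}$ and $h=\frac{3\delta+2-\sqrt{4-3\delta^2}}{2\delta(\delta+1)}$, which the closed forms are designed to satisfy.

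The main obstacle is obtaining the dual certificate in closed form for arbitrary $N$ rather than instance by instance: one must propose explicit formulas for $\lambda_{ij},\tau_k,\gamma_k$, prove their nonnegativity, and establish that the residual quadratic form $Q$ is positive semidefinite uniformly over each $h$-interval. The middle regime is the most delicate, since the multipliers depend on the irrational root $\tilde\lambda$ and the positive-semidefiniteness check must be carried through symbolically with that quantity; handling the $N$-dependence, i.e. the telescoping pattern of the multipliers across iterations, is where most of the technical work will lie.
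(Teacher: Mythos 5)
Your proposal sets out to build a monolithic $N$-step PEP dual certificate: explicit multipliers $\lambda_{ij}$, $\tau_k$ and weights $\gamma_k$ for the full trajectory, with a symbolic positive-semidefiniteness verification over each stepsize interval. That is not what the paper does, and the gap is that you leave precisely the hard part unexecuted: producing closed-form multipliers valid for arbitrary $N$, and carrying a PSD check through the irrational root $\tilde\lambda$, is exactly what the paper declares intractable (``performing a tight analysis becomes intractable beyond a few steps due to the symbolic complexity of the resulting equations''). As written, your argument is a program whose central step --- which you yourself identify as ``the core of the argument'' and ``where most of the technical work will lie'' --- is missing, so the theorem is not proved.

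The key idea you are missing is that no $N$-step certificate needs to be constructed at all: the statement is a short corollary of the one-step result. The first inequality of Theorem \ref{theorem::full_th_inexact_one_step} is stated against the \emph{per-iteration decrease}, $\frac{1}{L}\|\nabla f(x_{k+1})\|^2 \le C(h,\delta)\,(f(x_k)-f(x_{k+1}))$. Applying it at every iteration and summing telescopes the right-hand side to $C(h,\delta)\,(f(x_0)-f(x_N))$; bounding the sum of the $N$ squared gradient norms below by $N$ times their minimum, and absorbing $f(x_N)-f(x_\ast)\ge \frac{1}{2L}\|\nabla f(x_N)\|^2$ (the same argument as at the end of Theorem \ref{th::exact_convergence_rate}) yields $\frac{1}{L}\min_{k}\|\nabla f(x_k)\|^2 \le \bigl(N/C(h,\delta)+\tfrac12\bigr)^{-1}(f(x_0)-f(x_\ast))$. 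Each branch of $\tilde C_N$ is literally $\bigl(N/C(h,\delta)+\tfrac12\bigr)^{-1}$ for the corresponding branch of $C(h,\delta)$; e.g.\ the third branch $\frac{2}{N(1-h(1+\delta))^{-2}-(N-1)}$ arises this way. This also exposes an error in your description of the right regime: you attribute that branch to a geometric/oscillatory $N$-step mechanism with factor $1-h(1+\delta)$, but a genuinely geometric worst case gives a rate of the form $2(1-h(1+\delta))^{2N}$, which is the paper's (non-matching) \emph{lower} bound from the quadratic construction, not the upper bound being proved here; the stated branch comes purely from composing the one-step constant $N$ times.
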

\addtocounter{theorem}{-1}
\endgroup

\noindent Convergence is hence governed by three distinct regimes depending on the stepsize, compared to only two regimes in the exact case. We also discuss situations where this rate is tight, which includes the case of a single step for any stepsize (see in Section \ref{sec::one_step}, where both analytical and numerical arguments are used to establish tightness), and some subset of the three stepsize regimes for any number of iterations (see Section \ref{sec::several_steps}).

\subsection{Performance Estimation}

\mypar{Formulation.} The tight convergence rates described in this paper were obtained using the Performance Estimation Problem (PEP) methodology, first introduced in \cite{drori2014performance}, which aims at computing the worst-case convergence rate of some optimization method as the solution of an optimization problem itself. 
More precisely, the convergence rate $\sigma_N$ after $N$ iterations of a given algorithm $\mathcal{A}$ over a given class of objective functions $\mathcal{F}$ can be shown to be equal to the optimal value of the following
Performance Estimation Problem 
\begin{equation*}
    \begin{aligned}\label{eq::reformulatedPEP_opti_problem}
    \sigma_N  = \max_{\{x_i,g_i,f_i\}_{i \in I}} \quad & \|g_N\|^2\\[-.3cm]
    %\textrm{s.t.} \quad & \exists f \in \mathcal{F} \textrm{ s.t. } 
    & x_{1}, \ldots, x_N \textrm{ are generated from } x_0 \textrm{ by algorithm } \mathcal{A}, 
    \\ & \text{there exists an interpolating function $f \in \mathcal{F}$ such}
    \\ & \text{that $f(x_i) = f_i$ and $\nabla f(x_i) = g_i$ for all $i \in I$,}%\mathcal{O}_f(x_i) = \{f_i,g_i\} ~~\forall i \in I 
    \\ & g_\ast = 0, \text{ and } f_0 - f_\ast  \leq 1.
    %\\  
    %\textrm{ with } \{f_i,g_i\}_{i \in \{0,\cdots,N-1\}} \\
    \end{aligned}
\end{equation*}
(with set $I = \{0, 1, \cdots, N, *\}$), which can be formulated exactly and solved as a tractable semidefinite optimization problem, via the use of a Gram matrix lifting \cite{taylor2017smooth}. 

The condition that guarantees that the iterates, gradients, and function values contained in variables $\{x_i,g_i,f_i\}_{i \in I}$ match some interpolating function $f \in \mathcal{F}$ must be made explicit. For the class of $L$-smooth convex functions studied in this paper, it was shown in \cite{taylor2017smooth} that this is equivalent to requiring the following inequality,  called \textit{interpolation condition}
\begin{equation}\label{eq::Interpolation_ineq}\tag{INT}
    Q_{ij} : \quad f_i \geq f_j + \langle g_j, x_i-x_j\rangle  + \tfrac{1}{2L}\|g_j-g_i\|^2 
\end{equation}
to hold for every pair of indices $i,j \in  I$. 

To analyze algorithms that use relatively inexact gradients, such as Algorithm \ref{algo::inexact_gradient}, additional variables $d_i$ are introduced and the defining inequality \eqref{ine} is added to the PEP, rewritten as the convex quadratic constraint $\| d_i - g_i\|^2 \le \delta^2 \| g_i\|^2$ (see \cite{taylor2017exact} for the idea of using inexact steps in a PEP).

The solution to the resulting semidefinite optimization problem can be obtained numerically using a standard solver such as MOSEK \cite{mosek}, used for all numerical results in this work. Several toolboxes are available to assist practitioners in deriving the formulation for a large variety of classes of algorithms and functions, such as PESTO \cite{taylor2017performance} and PEPit \cite{goujaud2024pepit}.

\mypar{Example.} To illustrate the Performance Estimation technique and give an example of its relevance to analyze inexact methods, we plot in Figure \ref{fig:exact_inexact_first_example} the tight numerical convergence rate obtained from solving the PEP for Algorithm \ref{algo::inexact_gradient} on $1$-smooth convex functions, showing that the behavior of a first-order method can be very different when relying on an inexact gradient (here we test $\delta =0.2$). On the left, the behavior of gradient descent with constant stepsize $h=1$ is barely modified. On the right, the same gradient descent with constant stepsize $h=1.65$, which improves the convergence rate in the exact case, is seen to deteriorate it significantly in the inexact case.

\begin{figure}[H]
    \centering
    \begin{subfigure}{0.49\textwidth}
        \includegraphics[width=\textwidth]{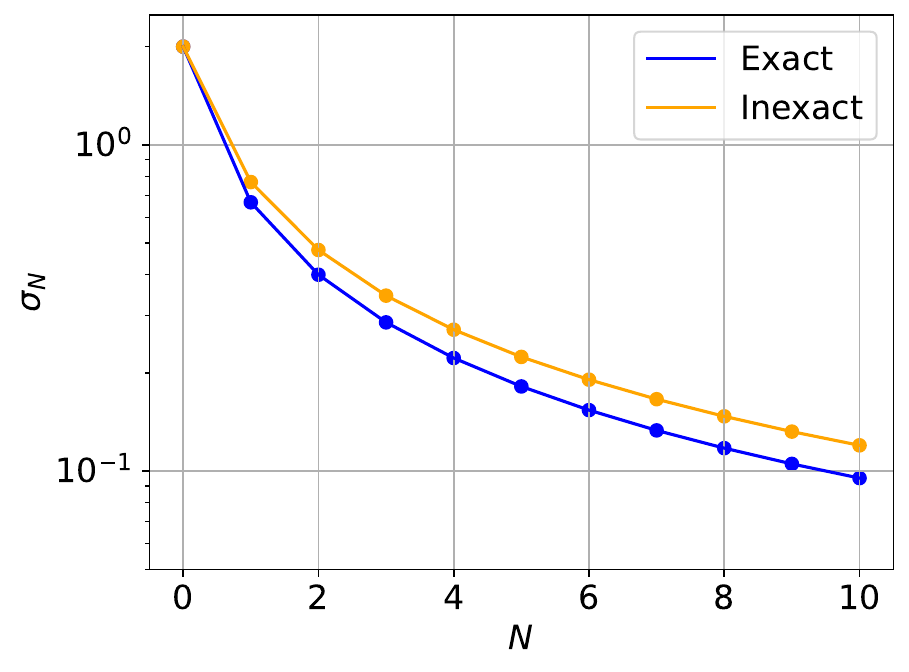}
        \caption{\centering $h=1$}
    \end{subfigure}
    \begin{subfigure}{0.49\textwidth}
        \includegraphics[width=\textwidth]{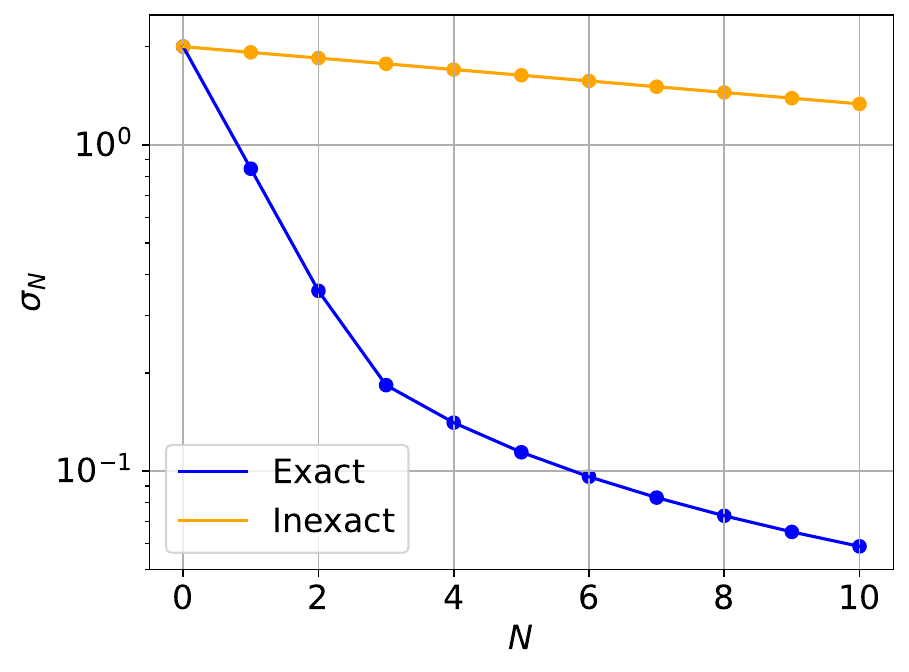}
        \caption{\centering $h=1.65$}
    \end{subfigure}
    \caption{\centering Convergence rate $\sigma_N$ of exact and inexact ($\delta=0.2$) gradient descent  after $N$ iterations.}
    \label{fig:exact_inexact_first_example}
\end{figure}

\mypar{Deriving proofs.} {Solving the PEP not only yields a tight worst-case performance guarantee, but also provides a way to derive a proof of those convergence bounds \cite{taylor2017smooth}. Those proofs are a byproduct of standard convex optimization duality. More concretely, all PEP convergence bounds can be obtained by summing the interpolation and inexactness inequalities arising in the formulation with suitable multipliers, which are given by optimal dual variables. 

In this work, we use the PEP framework not only to compute tight worst-case rates numerically, but also as an exploration tool to gain insight about the structure of valid convergence proofs. In several situations, we can analyze the numerical optimal value and solution (both primal and dual) of the PEP for several values of $N$ (number of iterations), $h$ (stepsize), and $\delta$  (inexactness level), identify patterns, and conjecture candidate analytical expressions for them. These serve as a basis to derive rigorous mathematical proofs for lower and upper bounds on the convergence rates.
This two-step approach, based on first gaining insights from numerical values and then producing a fully analytical argument, is classic within the PEP community, and allowed us to derive all convergence rates and proofs presented in this paper. 
\newline

\subsection{Paper organization}

In Section \ref{sec::one_step} we derive a tight worst-case rate after one step, i.e., find an expression for $\sigma_1$ that cannot be improved, as it is exactly matched by some function $f$. An explicit analytical expression for such a function $f$ is also provided for nearly all values of the stepsize.

Section \ref{sec::several_steps} studies the case where several steps of the methods are performed, and provides computable upper and lower bounds using the PEP framework. While performing a tight analysis becomes intractable beyond a few steps due to the symbolic complexity of the resulting equations, the associated semidefinite programs can be explicitly constructed and solved for any number of iterations, allowing to obtain tight numerical performance bounds (albeit with a computational cost that grows with $N$).

We also provide principled recommendations for the optimal choice of a constant stepsize according to the worst-case rate: as opposed to the exact case analyzed in \cite{taylor2017smooth}, the optimal stepsize does not seem to be very sensitive to the number of iterations. This reflects a key difference induced by the relative inexactness model and highlights the importance of adapting algorithmic parameters to account for inexactness. Section \ref{sec::conclusion} presents some conclusions. All figures from this paper, as well as the associated code, are openly available on \url{https://github.com/Verpierre/Inexact_gradient_descent}.

\section{One step of relatively inexact gradient descent}\label{sec::one_step}

\subsection{Convergence rate in the exact case ($\delta=0$)}

Before diving into the analysis of convergence rates in the relatively inexact setting, we first derive a comprehensive proof of one step of exact Gradient Descent ($N=1$ in  Theorem \ref{theo::rotaru_exact}). This result is proved in \cite{rotaru2024exact}, but we present an alternate proof here, whose structure will also be used in the inexact case.

\begin{theorem}[Convergence rate of one step of exact gradient descent on smooth convex functions]\label{th::exact_convergence_rate}
 One step of Algorithm \ref{algo::inexact_gradient} applied to a convex $L$-smooth function $f$ with an exact gradient ($\delta=0$), started from iterate $x_0$ with a constant stepsize $h\in[0,2]$, generates an iterate $x_1$ satisfying
\begin{equation*}
    \tfrac{1}{L} \|\nabla f(x_1)\|^2\leq \max \left\{ \frac{1}{h},\frac{2}{(1-h)^{-2}-1} \right\} (f(x_0)-f(x_1))
\end{equation*}

and

\begin{equation*}
        \tfrac{1}{L}\|\nabla f(x_1)\|^2\leq \max\left(\frac{1}{h+\tfrac{1}{2}}, 2(1-h)^{2}\right)\left(f(x_0)-f(x_\ast)\right)
\end{equation*}
    
\end{theorem}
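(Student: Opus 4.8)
The plan is to treat both inequalities as dual certificates in the Gram-matrix (PEP) formulation. Since $\delta=0$ the step is simply $x_1 = x_0 - \tfrac hL g_0$ with $g_0:=\nabla f(x_0)$, $g_1:=\nabla f(x_1)$, so every term in the bounds is a combination of $\|g_0\|^2,\ \langle g_0,g_1\rangle,\ \|g_1\|^2$ and the value gaps. The whole argument then reduces to exhibiting, for each inequality, nonnegative multipliers for the interpolation inequalities $Q_{ij}$ (with $i,j\in\{0,1,\ast\}$, using $x_0-x_1=\tfrac hL g_0$ and $g_\ast=0$) whose weighted sum reproduces the claimed bound up to a residual quadratic form in $(g_0,g_1)$ that is positive semidefinite. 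Because the two arguments of each maximum exchange dominance exactly at $h=\tfrac32$ (the relevant differences factor as $h(3-2h)$ and $h^2(3-2h)$), I would split each proof into the regimes $h\in[0,\tfrac32]$ and $h\in[\tfrac32,2]$.

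For the first inequality only the pair $Q_{01},Q_{10}$ is needed. For $h\le\tfrac32$ I would use the combination $2\,Q_{01}+Q_{10}$, which yields
\[
f_0 - f_1 \;\ge\; \tfrac{h}{L}\|g_1\|^2 + \tfrac{1}{L}\bigl(\tfrac32 - h\bigr)\|g_0-g_1\|^2;
\]
dropping the (nonnegative) last term gives the coefficient $\tfrac1h$. For $h\ge\tfrac32$ the same combination fails, and I would instead take the weights $\lambda_{01}=\tfrac{1}{h-1}$, $\lambda_{10}=\tfrac{2-h}{h-1}$ (both nonnegative on this range, and reducing to $2,1$ at $h=\tfrac32$). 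The resulting residual quadratic form is rank-one positive semidefinite, with null direction $g_1=(1-h)g_0$, and discarding it produces the coefficient $\tfrac{2(1-h)^2}{h(2-h)}=\tfrac{2}{(1-h)^{-2}-1}$.

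The second inequality follows from the first by adding the single inequality $Q_{1\ast}$, which (since $g_\ast=0$) reads $f_1-f_\ast \ge \tfrac{1}{2L}\|g_1\|^2$. Summing it with either regime's certificate telescopes $f_0-f_1$ and $f_1-f_\ast$ into $f_0-f_\ast$ and adds exactly $\tfrac{1}{2L}\|g_1\|^2$, i.e. it raises the coefficient of $\tfrac1L\|g_1\|^2$ on the right-hand side by $\tfrac12$. Thus $h$ becomes $h+\tfrac12$ (giving $\tfrac{1}{h+1/2}$) in the first regime, while $\tfrac{h(2-h)}{2(1-h)^2}$ becomes $\tfrac{h(2-h)}{2(1-h)^2}+\tfrac12=\tfrac{1}{2(1-h)^2}$ (giving $2(1-h)^2$) in the second — precisely the two arguments of the second maximum.

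The routine part is verifying positive semidefiniteness of the $2\times2$ residual matrices and checking the regime crossover. The genuine obstacle is discovering the regime-2 weights $\lambda_{01},\lambda_{10}$: I would obtain them either from the numerically computed PEP dual, or analytically by forcing the residual form to be rank-one with null space aligned to the worst-case direction $g_1=(1-h)g_0$. That direction corresponds to the quadratic $f(x)=\tfrac L2\|x\|^2$, on which all interpolation inequalities hold with equality and which saturates both bounds for $h\ge\tfrac32$; this both motivates the multipliers and confirms tightness of the stated rate in the overshoot regime.
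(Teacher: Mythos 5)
Your proposal is correct and follows essentially the same route as the paper: your multipliers $(\lambda_{01},\lambda_{10})=(2,1)$ for $h\le\tfrac32$ and $\bigl(\tfrac{1}{h-1},\tfrac{2-h}{h-1}\bigr)$ for $h\ge\tfrac32$ are exactly the paper's $(\lambda_*+1,\lambda_*)$ with $\lambda_*=1$ and $\lambda_*=\tfrac{2-h}{h-1}$, and the second bound is obtained there too by adding $Q_{1\ast}$, i.e. $f_1-f_\ast\ge\tfrac{1}{2L}\|g_1\|^2$. The only difference is presentational: the paper \emph{derives} these weights by maximizing $\rho$ over the one-parameter family subject to positive semidefiniteness, whereas you state them and verify the (rank-one) residual form directly.
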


\begin{proof}
    We assume without loss of generality that the smoothness constant is \(L = 1\), because of a standard homogeneity argument \cite[Section 3.5]{taylor2017smooth}. 
    A single step of the method computes \(x_1 = x_0 - h g_0\).
    Using interpolation inequalities \eqref{eq::Interpolation_ineq} derived from smoothness and convexity, we obtain
\begin{equation*}
\begin{aligned}
    Q_{01} &: f_0 - f_1 \geq \frac{g_0^2}{2} + \frac{g_1^2}{2} +(h-1) g_0 g_1 \\
    Q_{10} &: f_1 - f_0 \geq \frac{1-2h}{2} g_0^2 + \frac{g_1^2}{2} - g_0 g_1,
\end{aligned}
\end{equation*}
where the definition of $x_1$ is used to express everything in terms of gradients.

    We now form a nonnegative linear combination of these inequalities using the multipliers
    \begin{equation*}
    \begin{aligned}
        \lambda_{01} &= \lambda + 1, \\
        \lambda_{10} &= \lambda.
    \end{aligned}
    \end{equation*}
    where $\lambda$ is a nonnegative parameter that will be chosen later. The combination \(\lambda_{01}Q_{01} + \lambda_{10}Q_{10}\) yields a quadratic inequality of the form
    \begin{equation*}
        f_0 - f_1 \geq \frac{1}{2}
        \begin{pmatrix}
            g_0 \\ g_1
        \end{pmatrix}^\top
        \begin{pmatrix}
            2(1-h)\lambda +1 & (h-2)\lambda - 1 + h \\
            (h-2)\lambda - 1 + h & 2\lambda +1
        \end{pmatrix}
        \begin{pmatrix}
            g_0 \\ g_1
        \end{pmatrix}
    \end{equation*}
(this specific parametrization with $\lambda \ge 0$ actually ensures that the left-hand side is equal to $f_0-f_1$). 
We aim to lower-bound \(f_0 - f_1\) by a multiple of \(\|g_1\|^2\), i.e., obtain \(f_0 - f_1 \geq \rho \|g_1\|^2\) for some value of $\rho$, as large as possible. To this end, we decompose the above matrix as
\begin{equation*}
    f_0-f_1 \geq \frac{1}{2}
    \begin{pmatrix}
        g_0 \\ g_1
    \end{pmatrix}
    \left[
    \underbrace{\begin{pmatrix}
        2(1-h) \lambda +1 & (h-2) \lambda - 1+h \\
        (h-2)\lambda - 1 + h & 2\lambda+1-2\rho
    \end{pmatrix}}_{A}
    +
    \begin{pmatrix}
        0 & 0\\
        0 & 2\rho
    \end{pmatrix}
    \right]
    \begin{pmatrix}
        g_0 & g_1
    \end{pmatrix}.
\end{equation*}
When matrix \(A\) above is positive semidefinite, inequality \(f_0 - f_1 \geq \rho \|g_1\|^2\) follows.
    To find the best possible rate \(\rho\), we aim to maximize it under the constraint that \(A\) is positive semidefinite.

    Matrix $A$ is positive semidefinite if and only if all its principal minors are non-negative. This yields the following optimization problem
    \begin{equation*}
    \begin{aligned}
        \max_{\lambda, \rho \geq 0} \quad&\rho \\
        \text{s.t.}\quad &A_{0,0}=2(1-h) \lambda +1\geq0 \\
        &\det(A)=2\rho(2(h-1)\lambda-1)+h(-h(1+\lambda)^2+4\lambda+2)\geq0
    \end{aligned}
\end{equation*}
which will, in principle, compute the best possible convergence rate. We now distinguish three cases based on the value of the stepsize \(h\).

\begin{description}
   \item[Case 1: $h=1$.]
   As $\lambda \geq0$, the first constraint is trivially satisfied, and the second one simplifies to
   \begin{equation*}
       2\rho \leq -\lambda^2+2\lambda+1.
   \end{equation*}
   The right-hand side is a quadratic in \(\lambda\), maximized for \(\lambda_\ast = 1\), yielding
        \[
            \rho_\ast = \frac{-1^2 + 2 \cdot 1 + 1}{2} = 1.
        \]

   \item[Case 2: $h< 1$.]
   The first constraint becomes
   \begin{equation}\label{case2:constraint1}
       \lambda\geq \frac{1}{2(h-1)},
   \end{equation}
   while the second one can be rewritten as
   \begin{equation*}
       \rho \leq \frac{h(h(1+\lambda)^2-4\lambda-2)}{2(2(h-1)\lambda-1)}.
   \end{equation*}
   We define
        \[
            g(\lambda) := \frac{h(h(1+\lambda)^2-4\lambda-2)}{2(2(h-1)\lambda-1)},
        \]
        so that the second constraint is \(\rho \leq g(\lambda)\).
   To maximize \(\rho\), we thus maximize \(g(\lambda)\) subject to the constraint \eqref{case2:constraint1}. Computing the first-order derivative of $g(\lambda)$ yields the first-order optimality condition.
   \begin{equation*}
       \frac{dg(\lambda)}{d\lambda}=\frac{2h^2(\lambda-1)\left((h-1)\lambda+h-2\right)}{(2(h-1)\lambda-1)^2}=0
   \end{equation*}
   whose roots are $\lambda=\frac{2-h}{h-1}$ and $\lambda=1$. Studying the sign of $\frac{dg(\lambda)}{d\lambda}$ tells us that $g(\lambda)$ is maximized for $\lambda=1$, which satisfies \eqref{case2:constraint1}, as $h<1$. It thus leads to $\lambda_\ast=1$ and $\rho_\ast=g(\lambda_\ast)=h$. We can note that the rate found in \emph{Case 1} also matches this expression.

   \item[Case 3: $h> 1$.] In this case, the first constraint becomes
   \begin{equation}\label{case3:constraint1}
       \lambda \leq \frac{1}{2(h-1)}
       \end{equation}
    which is the same constraint as in the previous case, but with the inequality reversed. The second constraint is the same as in the previous case, with the same function $g(\lambda)$.
    
    When \(h \in (1, \tfrac{3}{2}]\), we observe that \(\lambda = 1\) satisfies the constraint \eqref{case3:constraint1}, so the maximum is attained at \(\lambda_\ast = 1\), yielding \(\rho_\ast = h\).

        When \(h\in[\frac{3}{2},2]\), the maximum of \(g(\lambda)\) shifts to \(\lambda = \frac{2 - h}{h - 1}\), which still satisfies the constraint \eqref{case3:constraint1}. Plugging this into \(g\) gives
        \[
            \rho_\ast = \frac{(2 - h)h}{2(h - 1)^2} = \frac{1}{2}\left( (1 - h)^{-2} - 1 \right).
        \]
   
\end{description}

\noindent Combining all three cases, the optimal multiplier is
    \[
        \lambda_\ast = \begin{cases}
            1 & \text{if } h \leq \tfrac{3}{2}, \\
            \frac{2 - h}{h - 1} & \text{if } h \in [\tfrac{3}{2}, 2],
        \end{cases}
    \]
    leading to the following convergence rate, combining two regimes
    \begin{equation*}
        f_0 - f_1 \geq \min\left(h, \frac{1}{2} ((1 - h)^{-2} - 1)\right) \|g_1\|^2
    \end{equation*}
    which is equivalent to the first part of the theorem. Then, as the function $f$ is $1$-smooth and convex, we can write the following 
    \begin{equation*}
         f(x)-f(x_\ast) \ge \frac{1}{2} \|\nabla f(x)\|^2,
    \end{equation*}
which comes from the interpolation inequality \ref{ine} written between $x$ and a minimizer $x^*$ (where $\nabla f(x_\ast)=0$ holds). This inequality with $x=x_1$ gives $f_1-f_\ast \ge \tfrac12 \| g_1 \|^2$, and adding it to the previsously obtained inequality leads to  
    \begin{equation*}
        f_0 - f_\ast \geq \min\left(h + \tfrac12, \tfrac{1}{2} (1 - h)^{-2}\right) \|g_1\|^2,
    \end{equation*}
which is equivalent to the second bound we wanted to establish.    
\end{proof}

This result highlights the well-structured behavior of exact gradient descent, where two distinct regimes naturally emerge depending on the stepsize $h$. Furthermore, this worst-case convergence rate is \emph{tight}, meaning that it is achieved by an explicit function, and therefore cannot be improved; see \cite{rotaru2024exact} for more details and results in the exact case.\\

\subsection{Convergence rate in the inexact case ($\delta \in (0,1)$)}\label{sec::inexact_one_step}

Building on the intuition and structure of the proof in the exact case, we now turn to the relatively inexact setting. Before presenting the result, we introduce a key quantity that will play a central role in the convergence rate and its proof.

\begin{definition}\label{def::tilde_lambda}
    The quantity $\tilde{\lambda}$ is defined as the largest real root of the following cubic equation in $\lambda$, depending on both $h$ and $\delta$
    \begin{equation*}
        \left[(\delta^2-1)h^2+2h\right]\lambda^3+\left[2(\delta^2-1)h^2+5h-4\right]\lambda^2+\left[(\delta^2-1)h^2+4h-4\right]\lambda+\left[h-1\right] = 0.
    \end{equation*}
\end{definition}
\noindent This quantity, which acts as a multiplier in the proof of the next theorem, is illustrated in Figure \ref{fig::lambda}.
We now state the main result for a single iteration of relatively inexact gradient descent.

\begin{theorem}[Convergence rate of one step of inexact gradient descent on smooth convex functions]\label{theorem::full_th_inexact_one_step}
    One step of Algorithm \ref{algo::inexact_gradient} applied to a convex $L$-smooth function $f$ with an inexact gradient with relative inexactness $\delta\in(0,1)$, started from iterate $x_0$ with a constant stepsize $h\in[0,\frac{2}{1+\delta}]$,  generates an iterate $x_1$ satisfying
    \[
        \frac{1}{L} \|\nabla f(x_1)\|^2 \leq C(h, \delta) (f(x_0) - f(x_1)),
    \]
    where $C(h, \delta)$ is given by the continuous function
\small
\[
C(h, \delta) =
\left\{
\begin{array}{lll}
    \frac{1}{h(1-\delta)} & \text{if } h \in \left[0, \frac{3}{2(1+\delta)}\right) & \text{(left regime)} \\
    \frac{2\tilde{\lambda}}{h\tilde{\lambda}^2+2(h-1)\tilde{\lambda}+h-1} 
        & \text{if } h \in \left[\frac{3}{2(1+\delta)}, \frac{3\delta+2-\sqrt{4-3\delta^2}}{2\delta(\delta+1)}\right] 
        & \text{(intermediate regime)} \\
    \dfrac{2}{(1-h(1+\delta))^{-2}-1} 
        & \text{if } h \in \left(\frac{3\delta+2-\sqrt{4-3\delta^2}}{2\delta(\delta+1)}, \frac{2}{1+\delta}\right] 
        & \text{(right regime)}
\end{array}
\right.
\]
\normalsize
    and \begin{equation*}
    \frac{1}{L} \|\nabla f(x_1)\|^2 \leq \tilde{C}(h, \delta) (f(x_0) - f(x_\ast)),
    \end{equation*}
    where $\tilde{C}(h, \delta)$ is given by the continuous function
    \small
\[
\tilde{C}(h, \delta) =
\left\{
\begin{array}{lll}
    \frac{1}{h(1-\delta)+\tfrac{1}{2}}, & \text{if } h \in \left[0, \frac{3}{2(1+\delta)}\right) & \text{(left regime)} \\
    \frac{2\tilde{\lambda}}{h\tilde{\lambda}^2+(2h-1)\tilde{\lambda}+h-1}, & \text{if } h \in \left[\frac{3}{2(1+\delta)}, \frac{3\delta+2-\sqrt{4-3\delta^2}}{2\delta(\delta+1)}\right]
        & \text{(intermediate regime)} \\
    2(1-h(1+\delta))^{2}, & \text{if } h \in \left(\frac{3\delta+2-\sqrt{4-3\delta^2}}{2\delta(\delta+1)}, \frac{2}{1+\delta}\right]
        & \text{(right regime)}
\end{array}
\right.
\]
\normalsize
\end{theorem}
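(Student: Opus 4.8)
The plan is to mirror the structure of the exact-case proof of Theorem~\ref{th::exact_convergence_rate}, now carrying the approximate gradient $d_0$ as an extra variable and the inexactness bound as an extra inequality. By the same homogeneity argument I set $L=1$ and write the single step as $x_1 = x_0 - h d_0$. Expressing the two interpolation inequalities between $x_0$ and $x_1$ in terms of gradients then gives
\[ Q_{01}: f_0 - f_1 \ge h\,\langle g_1, d_0\rangle + \tfrac12\|g_1 - g_0\|^2, \qquad Q_{10}: f_1 - f_0 \ge -h\,\langle g_0, d_0\rangle + \tfrac12\|g_0 - g_1\|^2, \]
and I append the inexactness inequality $R: \delta^2\|g_0\|^2 - \|d_0 - g_0\|^2 \ge 0$. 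Keeping the exact-case multipliers $\lambda_{01}=\lambda+1$ and $\lambda_{10}=\lambda$ and introducing a new nonnegative multiplier $\mu$ for $R$, the combination $\lambda_{01}Q_{01}+\lambda_{10}Q_{10}+\mu R$ again has left-hand side exactly $f_0-f_1$, yielding $f_0 - f_1 \ge \mathrm{QF}(g_0,g_1,d_0)$ for an explicit quadratic form.

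The second step is to reduce the search for the best rate to a small semidefinite feasibility problem, exactly as in the exact case but one dimension larger. Writing $\mathrm{QF} - \rho\|g_1\|^2 = \xi^\top M\, \xi$ with $\xi = (g_0,g_1,d_0)^\top$ for a symmetric $3\times 3$ matrix $M(\lambda,\mu,\rho)$ (also depending on $h,\delta$), the bound $f_0-f_1 \ge \rho\|g_1\|^2$ holds as soon as $M\succeq0$, so the best provable rate comes from maximizing $\rho$ over $\lambda,\mu\ge0$ subject to $M\succeq0$, and $C=1/\rho$. Since the $(d_0,d_0)$ entry of $M$ equals $\mu>0$ at any nontrivial optimum (the inexactness constraint is binding for $\delta>0$), I would eliminate $d_0$ by taking the Schur complement with respect to that entry. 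This collapses the $3\times3$ condition to a $2\times2$ positive-semidefiniteness condition in $(g_0,g_1)$ that is the direct analogue of the matrix $A$ from the exact proof, now carrying the extra parameter $\mu$, and the optimization reduces to maximizing $\rho$ subject to nonnegativity of the leading entry and of the $2\times2$ determinant.

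I would then solve this optimization by optimizing jointly over $(\lambda,\mu)$ — for instance by first solving for the best $\mu$ in closed form from its first-order condition and then optimizing over $\lambda$ — which produces the three regimes. For small stepsizes the optimum sits at a boundary giving $\rho = h(1-\delta)$, hence $C = \tfrac{1}{h(1-\delta)}$; for large stepsizes a different minor becomes binding and yields $\rho = \tfrac12\big((1-h(1+\delta))^{-2}-1\big)$; and in between the maximizing $\lambda$ is an interior critical point which, after clearing denominators, is precisely the largest root of the cubic of Definition~\ref{def::tilde_lambda}, i.e.\ $\tilde\lambda$. The two thresholds $h=\frac{3}{2(1+\delta)}$ and $h=\frac{3\delta+2-\sqrt{4-3\delta^2}}{2\delta(\delta+1)}$ are obtained by matching this interior critical point against the two boundary expressions, and continuity of $C(h,\delta)$ is checked there. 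I expect the intermediate regime to be the main obstacle: carrying out the joint optimization over $(\lambda,\mu)$, certifying that the relevant root of the cubic is the genuine maximizer rather than a spurious critical point, and verifying the exact thresholds all require heavy algebra, which is exactly where the numerical PEP exploration guides the candidate expressions.

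Finally, the second bound follows from the first with essentially no extra work, as in the exact proof. Since $g_1$ is the \emph{true} gradient and $g_\ast=0$, the interpolation inequality between $x_1$ and $x_\ast$ gives $f_1 - f_\ast \ge \tfrac12\|g_1\|^2$; adding this to $f_0-f_1 \ge \tfrac1C\|g_1\|^2$ inserts an extra $+\tfrac12$ into the coefficient of $\|g_1\|^2$, which turns each $C(h,\delta)$ into the corresponding $\tilde C(h,\delta)$ in every regime.
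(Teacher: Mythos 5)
Your proposal follows essentially the same route as the paper's proof: the same multipliers $(\lambda+1,\lambda,\mu)$ on the two interpolation inequalities plus the inexactness constraint, the same Schur-complement elimination of the direction variable reducing everything to a $2\times2$ positive-semidefiniteness condition in $(g_0,g_1)$, the same three-regime case analysis with the largest root $\tilde\lambda$ of the cubic governing the intermediate regime, and the same final step of adding $f_1-f_\ast\ge\tfrac12\|g_1\|^2$ to convert each $C(h,\delta)$ into $\tilde C(h,\delta)$. The only cosmetic differences are that the paper works with the rescaled error $\td=(d_0-g_0)/\delta$ instead of $d_0$ itself, and that in the paper the intermediate regime arises from the degenerate boundary case $A_{1_{0,0}}=0$ of the PSD constraint (the left and right regimes being the interior critical points in the multiplier $b$), rather than from an interior critical point in $\lambda$ as you describe -- but this does not change the certificates or the resulting rates.
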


\begin{proof}

Recall that $d_0$ denotes the inexact gradient used for the first iteration.  We assume again, without loss of generality, that the smoothness constant is \(L = 1\). The standard homogeneity argument valid for exact gradient descent also applies to relatively inexact gradient descent, i.e., when the computed direction $d_0$ satisfies $\|d_0-\nabla f(x_0)\|\leq\delta \|\nabla f(x_0)\|$, since this condition is preserved under a scaling of function $f$. We define
\begin{equation*}
    \td \triangleq \frac{d_0 - g_o}{\delta},
\end{equation*}
so that the inequality defining the relatively inexact gradient \eqref{ine} becomes
\begin{equation*}
    \|\td\|^2 \leq \|g_0\|^2,
\end{equation*}
We write interpolation inequalities $Q_{01}$ and $Q_{10}$, now involving the inexact search direction, and the above condition on inexactness as
    \begin{equation*}
    \begin{aligned}
    Q_{01} :
         f_0 - f_1 &\geq h g_1 d_0 + \frac{g_1^2}{2} + \frac{g_0^2}{2} - g_0g_1 \\
        &= h\delta g_1 \td  + (h-1)g_0g_1 + \frac{g_1^2}{2} + \frac{g_0^2}{2}\\
        Q_{10} : 
            f_1 - f_0 &\geq -h g_0d_0 + \frac{g_1^2}{2} + \frac{g_0^2}{2} - g_0g_1 \\
            &= -h\delta g_0 \td + \left(\frac{1}{2}-h\right) g_0^2 + \frac{g_1^2}{2} - g_0g_1 \\
        Q_{inexact} : 0 &\geq \td^2 - g_0^2
    \end{aligned}
    \end{equation*}
We choose the following non-negative multipliers, where $b$ is a nonnegative parameter
        \begin{equation*}
        \begin{aligned}
            \lambda_{01} &= \lambda + 1 \\
            \lambda_{10} &= \lambda \\
            \lambda_{inexact} &= b
        \end{aligned}
        \end{equation*}
The nonnegative combination $(\lambda+1)Q_{01}+\lambda Q_{10}+bQ_{\text{inexact}}$ yields
    \begin{equation*}
        f_0-f_1\geq \frac{1}{2}
        \begin{pmatrix}
            g_0 \\
            g_1 \\
            \td
        \end{pmatrix}
        \left[
        A
        +
        \begin{pmatrix}
            0&0&0\\
            0&2\rho&0\\
            0&0&0
        \end{pmatrix}\right]
        \begin{pmatrix}
            g_0 &
            g_1 &
            \td
        \end{pmatrix},
    \end{equation*}
with
\begin{equation*}
    A=\begin{pmatrix}
            2(1-h)\lambda-2b+1 & (h-2)\lambda-1+h & -h\delta\lambda \\
            (h-2)\lambda -1+h & 2\lambda+1-2\rho & h\delta(\lambda+1) \\
            -h\delta\lambda & h\delta(\lambda+1)& 2b
        \end{pmatrix}
\end{equation*}
where, similarly to the proof of the exact case, we introduce $\rho$ to denote the coefficient of $\|g_1\|^2$ in the rate. We decompose the matrix $A$ using the Schur complement \cite{zhang2006schur} with respect to the bottom right element. This leads to the sum $A=A_1+A_2$, where

\begin{equation*}
\begin{aligned}
    A_1&=\begin{pmatrix}
    - 2 b - 2 \lambda \left(h - 1\right) + 1 - \frac{\delta^{2} h^{2} \lambda^{2}}{2b} & h + \lambda \left(h - 2\right) - 1 + \frac{\delta^{2} h^{2} \lambda \left(\lambda + 1\right)}{2b}&0\\h + \lambda \left(h - 2\right) - 1 + \frac{\delta^{2} h^{2} \lambda \left(\lambda + 1\right)}{2b} & - 2 \rho + 2 \lambda + 1 - \frac{\delta^{2} h^{2} \left(\lambda + 1\right)^{2}}{2 b}&0\\0&0&0
    \end{pmatrix},
    \\
    A_2&=\begin{pmatrix}
    \frac{\delta^{2} h^{2} \lambda^{2}}{2 b} & - \frac{\delta^{2} h^{2} \lambda \left(\lambda + 1\right)}{2 b}&-h\delta\lambda\\- \frac{\delta^{2} h^{2} \lambda \left(\lambda + 1\right)}{2 b} & \frac{\delta^{2} h^{2} \left(\lambda + 1\right)^{2}}{2 b}&h\delta(\lambda+1)\\-h\delta\lambda&h\delta(\lambda+1)&2b
    \end{pmatrix}.
    \end{aligned}
\end{equation*}
Matrix $A_2$ is rank-1 and positive semidefinite matrix and can be written as
\begin{equation*}A_2=
    \begin{pmatrix}
        -\frac{h\delta\lambda}{\sqrt{2b}} \\ \frac{h\delta(\lambda+1)}{\sqrt{2b}}\\\sqrt{
        2b}
    \end{pmatrix}
    \begin{pmatrix}
        -\frac{h\delta\lambda}{\sqrt{2b}} &\frac{h\delta(\lambda+1)}{\sqrt{2b}}&\sqrt{
        2b}
    \end{pmatrix},
\end{equation*}
Hence, matrix $A$ will be positive semidefinite as soon as this is the case for matrix $A_1$, and maximizing the worst-case bound corresponds to maximizing $\rho$ under the constraint that $A_1\succeq0$. It is easy to show that $A_1\succeq0$ is equivalent to one of the following
\begin{itemize}
    \item $A_{1_{0,0}}>0$ and $\det(A_1)\geq0$ (Cases $1$ and $2$ below).
    \item $A_{1_{0,0}}=0$, $A_{1_{1,0}}=A_{1_{0,1}}=0$, and $A_{1_{1,1}}\geq0$ (Case 3 below).
\end{itemize}
The first two cases lead to straightforward but lengthy computations, which can be carried out symbolically using the Python library \texttt{SymPy} \cite{10.7717/peerj-cs.103}. The corresponding code is available at: \url{https://github.com/Verpierre/Inexact_gradient_descent}. Some further symbolic simplifications were performed with the software Wolfram Mathematica \cite{Mathematica}.

We now focus on the first two cases, for which the feasibility domain of $b$ and the corresponding upper bound on $\rho$ read

\scriptsize
\begin{equation}\label{eq::main_problem}
    \begin{aligned}
        \max_{\lambda,b,\rho}\quad &\rho\\
        \text{s.t.}\quad  b &> \frac{1}{4}+\frac{\lambda}{2} \left(1-h\right) - \frac{\sqrt{\left(2\lambda (h(1-\delta)-1) - 1\right) \left(2\lambda (h(1+\delta)-1) - 1\right)}}{4}\quad(c_{1b})\\
        b&< \frac{1}{4}+\frac{\lambda}{2} \left(1-h\right) + \frac{\sqrt{\left(2\lambda (h(1-\delta)-1) - 1\right) \left(2\lambda (h(1+\delta)-1) - 1\right)}}{4}\quad(c_{1b})\\
        \rho&\leq \frac{b^2(8 \lambda +4)+b\left(-2 h \left(\left(\delta
   ^2-1\right) h (\lambda +1)^2+4 \lambda +2\right)\right)+\delta
   ^2 h^2 (2 \lambda +1)}{8 b^2+b(8 (h-1) \lambda -4)+2 \delta ^2
   h^2 \lambda ^2}\quad(c_2)
    \end{aligned}
\end{equation}
\normalsize

Considering the natural extension of the solutions of the exact case (see the proof of Theorem \ref{th::exact_convergence_rate}) leads to the following cases.

\begin{description}
   \item[Case 1: $\lambda=1$.]

   The problem becomes

   \begin{equation*}
    \begin{aligned}
        \max_{b,\rho}\quad &\rho\\
        \text{s.t.}\quad  b &> \frac{3}{4}- \frac{h}{2} - \frac{\sqrt{4(1-\delta^2) h^{2} - 12 h + 9}}{4}\quad(c_{11a}) \\
        b &<\frac{3}{4}- \frac{h}{2} + \frac{\sqrt{4(1-\delta^2)h^2 - 12 h + 9}}{4} \quad(c_{11b})\\
        \rho&\leq \frac{12b^2+b\left(-2 h \left(4 \left(\delta ^2-1\right) h+6\right)\right)+3 \delta ^2 h^2}{8 b^2+b(8 h-12)+2 \delta ^2h^2}\quad(c_{12})
    \end{aligned}
\end{equation*}
We denote the objective function as $\rho(b)$. As it is maximized, the problem is equivalent to
\begin{equation*}
    \begin{aligned}
        \max_{b}\quad &\rho(b)=\frac{12b^2+b\left(-2 h \left(4 \left(\delta ^2-1\right) h+6\right)\right)+3 \delta ^2 h^2}{8 b^2+b(8 h-12)+2 \delta ^2h^2}\\
        \text{s.t.}\quad  b &> \frac{3}{4}- \frac{h}{2} - \frac{\sqrt{4(1-\delta^2) h^{2} - 12 h + 9}}{4}\quad(c_{11a}) \\
        b &<\frac{3}{4}- \frac{h}{2} + \frac{\sqrt{4(1-\delta^2)h^2 - 12 h + 9}}{4} \quad(c_{11b})\\
    \end{aligned}
\end{equation*}

The first-order optimality condition for the objective function is
\begin{equation*}
    \frac{d\rho(b)}{db}=\frac{\left(2 b - \delta h\right) \left(2 b + \delta h\right) \left(2 \delta h - 2 h + 3\right) \left(2 \delta h + 2 h - 3\right)}{\left(4 b^{2} + 4 b h - 6 b + \delta^{2} h^{2}\right)^{2}}=0
\end{equation*}
whose maximum is reached when $b_\ast=\frac{h\delta}{2}$. We now check for which values of $h$ this value lies inside the admissible domain from the constraints $c_{11a}$ and $c_{11b}$.
Solving both irrational inequations in $(c_{11})$ leads to the following condition on $h$ for $b_\ast$ to be admissible
\begin{equation*}
    h < \frac{3}{2(1 + \delta)}.
\end{equation*}
Thus, the solution $b_\ast=\frac{h\delta}{2}$ holds as soon as $h< \frac{3}{2(1+\delta)}$. Furthermore, replacing $b$ by its optimal value in the expression of the rate $\rho(b)$ leads to
\begin{equation*}
    \begin{aligned}
        \lambda&=1\\
        b&=\frac{h\delta}{2}\\
        \rho&=h(1-\delta),
    \end{aligned}
\end{equation*}
when $h<\frac{3}{2(1+\delta)}$. Note that extending the multiplier $\lambda$ from the exact case leads to a rate that is very close to the exact one: it is scaled by a factor $1-\delta$, and equal to the exact case when $\delta=0$).

\item[Case 2: $\lambda=\frac{2-h(1+\delta)}{h(1+\delta)-1}$.]

This second case is an extension of the exact one (see the proof of Theorem \ref{th::exact_convergence_rate}), where $h\leftarrow h(1+\delta)$. In this case, problem \eqref{eq::main_problem} can be rewritten as

\tiny
\begin{equation*}
    \begin{aligned}
        \max_{b,\rho}\quad &\rho\\
        \text{s.t.}\quad  b &> \frac{1}{4}+\frac{\left(1- h\right) \left(- h \left(\delta + 1\right) + 2\right)}{2(h \left(\delta + 1\right) - 1)}-\frac{1}{4} \sqrt{\left(- \frac{\left(- h \left(\delta + 1\right) + 2\right) \left(2 \delta h - 2 h + 2\right)}{h \left(\delta + 1\right) - 1} - 1\right) \left(\frac{\left(- h \left(\delta + 1\right) + 2\right) \left(2 \delta h + 2 h - 2\right)}{h \left(\delta + 1\right) - 1} - 1\right)} \quad(c_{21a})\\
        b &< \frac{1}{4}+\frac{\left(1- h\right) \left(- h \left(\delta + 1\right) + 2\right)}{2(h \left(\delta + 1\right) - 1)}+\frac{1}{4} \sqrt{\left(- \frac{\left(- h \left(\delta + 1\right) + 2\right) \left(2 \delta h - 2 h + 2\right)}{h \left(\delta + 1\right) - 1} - 1\right) \left(\frac{\left(- h \left(\delta + 1\right) + 2\right) \left(2 \delta h + 2 h - 2\right)}{h \left(\delta + 1\right) - 1} - 1\right)} \quad(c_{21b})\\
        \rho&\leq -\frac{b^2(4 (\delta  h+h-3) (\delta  h+h-1))+b(-2 h ((\delta +1)h (-\delta +2 (\delta +1) h-7)+6))+\delta ^2 h^2 (\delta h+h-3) (\delta  h+h-1)}{2 \left(b^2\left(4 (\delta h+h-1)^2\right)+b\left(-2 (\delta  h+h-1) \left(2 (\delta +1)h^2-(\delta +5) h+3\right)\right)+\delta ^2 h^2 (\delta h+h-2)^2\right)}
    \end{aligned}
\end{equation*}
\normalsize
which is equivalent to
\tiny
\begin{equation*}
    \begin{aligned}
        \max_{b}\quad &\rho(b)=-\frac{b^2(4 (\delta  h+h-3) (\delta  h+h-1))+b(-2 h ((\delta +1)h (-\delta +2 (\delta +1) h-7)+6))+\delta ^2 h^2 (\delta h+h-3) (\delta  h+h-1)}{2 \left(b^2\left(4 (\delta h+h-1)^2\right)+b\left(-2 (\delta  h+h-1) \left(2 (\delta +1)h^2-(\delta +5) h+3\right)\right)+\delta ^2 h^2 (\delta h+h-2)^2\right)}\\
        \text{s.t.}\quad  &b\geq \frac{1}{4}+\frac{\left(1- h\right) \left(- h \left(\delta + 1\right) + 2\right)}{2(h \left(\delta + 1\right) - 1)}-\frac{1}{4} \sqrt{\left(- \frac{\left(- h \left(\delta + 1\right) + 2\right) \left(2 \delta h - 2 h + 2\right)}{h \left(\delta + 1\right) - 1} - 1\right) \left(\frac{\left(- h \left(\delta + 1\right) + 2\right) \left(2 \delta h + 2 h - 2\right)}{h \left(\delta + 1\right) - 1} - 1\right)} \\
        &b \leq \frac{1}{4}+\frac{\left(1- h\right) \left(- h \left(\delta + 1\right) + 2\right)}{2(h \left(\delta + 1\right) - 1)}+\frac{1}{4} \sqrt{\left(- \frac{\left(- h \left(\delta + 1\right) + 2\right) \left(2 \delta h - 2 h + 2\right)}{h \left(\delta + 1\right) - 1} - 1\right) \left(\frac{\left(- h \left(\delta + 1\right) + 2\right) \left(2 \delta h + 2 h - 2\right)}{h \left(\delta + 1\right) - 1} - 1\right)}
    \end{aligned}.
\end{equation*}
\normalsize

The first-order optimality condition on the objective function yields
\tiny
\begin{equation*}
    \frac{d\rho(b)}{db}=\frac{(2 (\delta +1) h-3) (2 b (\delta  h+h-1)-\delta  h) (\delta h (2 h (\delta  (\delta  h+h-3)-1)+3)-2 b (2 h-3) (\delta h+h-1))}{\left(b^2\left(4 (\delta  h+h-1)^2\right)+b\left(-2(\delta  h+h-1) \left(2 (\delta +1) h^2-(\delta +5)h+3\right)\right)+\delta ^2 h^2 (\delta  h+h-2)^2\right)^2}=0
\end{equation*}
\normalsize
and this rate is maximized for $b_\ast=\frac{h\delta}{2(h(1+\delta)-1)}$. Constraints  $(c_{21a})$ and $(c_{21b})$ are satisfied as soon as $h > \frac{3\delta+2-\sqrt{4-3\delta^2}}{2\delta(\delta+1)}$.
Replacing $b$ with its optimal value leads to
\begin{equation*}
    \begin{aligned}
        \lambda &=\frac{2-h(1+\delta)}{h(1+\delta)-1}\\
        b&=\frac{h\delta}{2(h(1+\delta)-1)}\\
        \rho&=\frac{h(1+\delta)(2-h(1+\delta))}{2(h(1+\delta)-1)^2},
    \end{aligned}
\end{equation*}
for $h > \frac{3\delta+2-\sqrt{4-3\delta^2}}{2\delta(\delta+1)}$. In this case also, we can see a natural evolution of the rate from the exact case, and we can, in particular, recover it when $\delta=0$.

\item[Case 3: Tight constraint on $b$.]

We now handle the remaining regime, corresponding to stepsizes ($h\in[\frac{3}{2(1+\delta)},\frac{3\delta+2-\sqrt{4-3\delta^2}}{2\delta(1+\delta)}]$), where the constraint $A_{1_{0,0}}=0$ is saturated. This leads to the following equality involving the variables $b$, $\lambda$, and $h$
\begin{equation}\label{inter_b_l}
\begin{aligned}
    - 2 b^{2} +\left(2\lambda(1-h)+1\right)b - \frac{\delta^{2} h^{2} \lambda^{2}}{2}&= 0\\
    b&\geq0
\end{aligned}
\end{equation}
Under this condition, matrix $A_1$ reduces to the form

\begin{equation*}
A_1 = \begin{pmatrix}
0 & A_{1_{0,1}} & 0 \\
A_{1_{1,0}} & A_{1_{1,1}} & 0 \\
0 & 0 & 0
\end{pmatrix},
\end{equation*}
where
\begin{align*}
A_{1_{1,0}} &= A_{1_{0,1}} = h + \lambda(h - 2) - 1 + \frac{\delta^2 h^2 \lambda(\lambda + 1)}{2b}, \\
A_{1_{1,1}} &= -2\rho + 2\lambda + 1 - \frac{\delta^2 h^2 (\lambda + 1)^2}{2b}.
\end{align*}

The matrix is positive semidefinite if and only if  $A_{1_{1,0}}=A_{1_{0,1}}=0$ and $A_{1_{1,1}}\geq0$. These conditions yield
\begin{equation}\label{inter_rate_l_b}
    \begin{aligned}
        &\left(h + \lambda \left(h - 2\right) - 1\right) + \frac{\delta^{2} h^{2} \lambda \left(\lambda + 1\right)}{2b}=0\\
        &\rho\leq\lambda+0.5-\frac{\delta^2h^2(\lambda+1)^2}{4b}
    \end{aligned}
\end{equation}

The first equation in \eqref{inter_rate_l_b} can be rewritten as
\begin{equation*}
    -\frac{\delta^2h^2\lambda(\lambda+1)}{4b}=\frac{1}{2}(h+\lambda(h-2)-1)
\end{equation*}
We can now substitute this expression into the inequality on $C$, which is tight at optimality (i.e., the inequality becomes an equality). This gives
\begin{equation*}
\begin{aligned}
    \rho &= \lambda+0.5-\frac{\delta^2h^2(\lambda+1)^2}{4b}\\
    &=\lambda+0.5-\frac{\delta^2h^2}{4b}\lambda(\lambda+1)-\frac{\delta^2h^2}{4b}\frac{\lambda(\lambda+1)}{\lambda}\\
    &= \lambda+0.5-\frac{1}{2}\left(h-1+\lambda(h-2)\right)\left(1+\frac{1}{\lambda}\right)\\
    %&=\lambda+0.5-\frac{1}{2}\left((h-1)+\lambda(h-2)+\frac{h-1}{\lambda}+(h-2)\right)\\
    &=h-1+\frac{h}{2}\lambda+\frac{h-1}{2}\frac{1}{\lambda}
\end{aligned}
\end{equation*}

Now, using the first equation of \eqref{inter_rate_l_b} in the quadratic relation \eqref{inter_b_l}, we get
\begin{equation*}
    \begin{aligned}
        -2b^2+(2\lambda(1-h)+1)b&=\frac{\delta^2h^2}{2}\lambda^2\\
        % &=\frac{\delta^2h^2}{2}\left[\lambda(\lambda+1)-\frac{\lambda(\lambda+1)}{\lambda+1}\right]\\
        % &=-b\left[(h-1)+(h-2)\lambda - \frac{(h-1)+(h-2)\lambda}{\lambda+1}\right]\\
        % &=-b\left[(h-2)(\lambda+1)+1 - \frac{(h-2)(\lambda+1)+1}{\lambda+1}\right]\\
        % &=-b\left[(h-2)(\lambda+1)+1-(h-2)-\frac{1}{\lambda+1}\right]\\
        &=-b\left[(h-2)\lambda+1-\frac{1}{\lambda+1}\right],
    \end{aligned}
\end{equation*}
hence, since $b\neq0$, we find
\begin{equation*}
\begin{aligned}
    2b&=(2\lambda(1-h)+1)+(h-2)\lambda+1-\frac{1}{\lambda+1}\\
    &=2-h\lambda-\frac{1}{\lambda+1}
\end{aligned}
\end{equation*}

Thus, we obtain closed-form expressions for $b$ and $C$ as functions of $\lambda$
\begin{equation*}
    \begin{aligned}
        \rho(\lambda)&=h-1+\frac{h}{2}\lambda+\frac{h-1}{2}\frac{1}{\lambda}\\
        b(\lambda)&=1-\frac{h}{2}\lambda-\frac{1}{2(1+\lambda)}
    \end{aligned}
\end{equation*}

In addition, the constraint from \eqref{inter_rate_l_b} can now be written entirely in terms of $\lambda$
\begin{equation*}
    0=(h-1+\lambda(h-2))+\frac{\delta^2h^2\lambda(\lambda+1)}{2b(\lambda)}
\end{equation*}

Substituting the expression of $b(\lambda)$ into this equation yields a cubic equation in $\lambda$, which defines $\tilde{\lambda}$ as in Definition \ref{def::tilde_lambda}. The convergence rate is maximized for $\lambda_\ast=\tilde{\lambda}$, the largest real root of this equation. Ultimately, we obtain the following
\begin{equation*}
\begin{aligned}
    &\tilde{\lambda} \text{ as defined in Definition \ref{def::tilde_lambda}} \\
    &\rho=h-1+\frac{h}{2} \tilde{\lambda}+\frac{h-1}{2}\frac{1}{ \tilde{\lambda}}=\frac{h\tilde{\lambda}^2+2(h-1)\tilde{\lambda}+h-1}{2\tilde{\lambda}}\\
    &b=1-\frac{h}{2} \tilde{\lambda}-\frac{1}{2(1+ \tilde{\lambda})}
\end{aligned}
\end{equation*}
\end{description}
To conclude, we take $C(h,\delta)=\frac{1}{\rho}$ and discussing the value of $\lambda_\ast$ as a function of $h$ and $\delta$, which leads to the first bound in the Theorem, involving $f(x_0)-f(x_1)$ in the right-hand side. The second bound with $f(x_0)-f(x_\ast)$ is then obtained with the argument used at the end of the proof of Theorem \ref{th::exact_convergence_rate}.

\end{proof}

This convergence rate, depending on the stepsize $h$ according to three distinct regimes, can be visualized in Figure \ref{fig:convergence_rate_inexact} for different values of $\delta$, while the evolution of the multiplier $\lambda$ with $h$ is shown in Figure \ref{fig::lambda}.

\begin{figure}[H]
    \centering
    \includegraphics[width=0.5\linewidth]{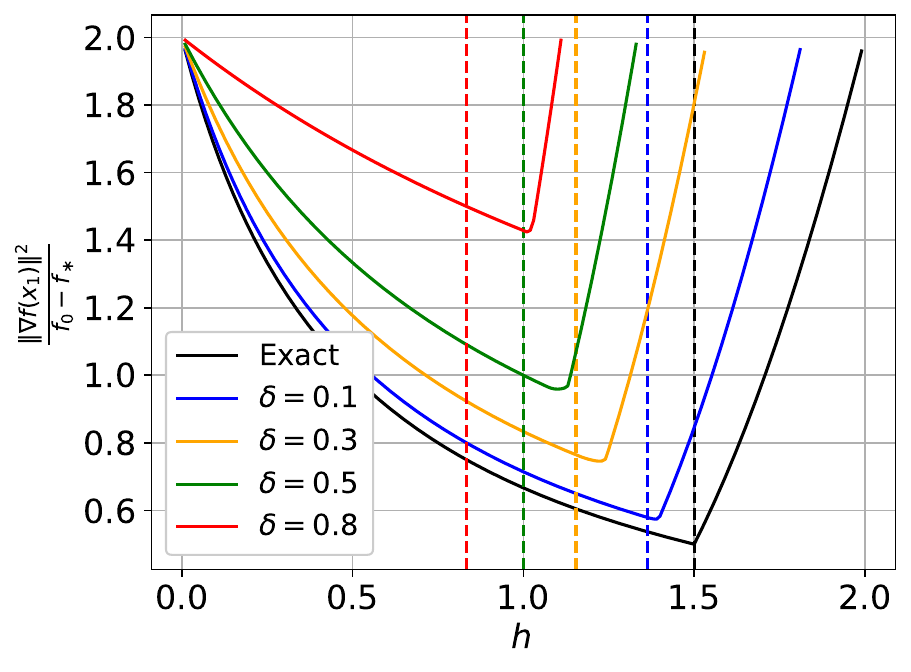}
    \caption{\centering Convergence rate of one step of exact and inexact gradient descents with several levels of inexactness $\delta$, depending on the stepsize $h$. Vertical dotted lines mark the particular stepsize $h=\frac{3}{2(1+\delta)}$}
    \label{fig:convergence_rate_inexact}
\end{figure}

\begin{figure}[H]
    \centering
    \begin{subfigure}{0.32\textwidth}
        \includegraphics[width=\textwidth]{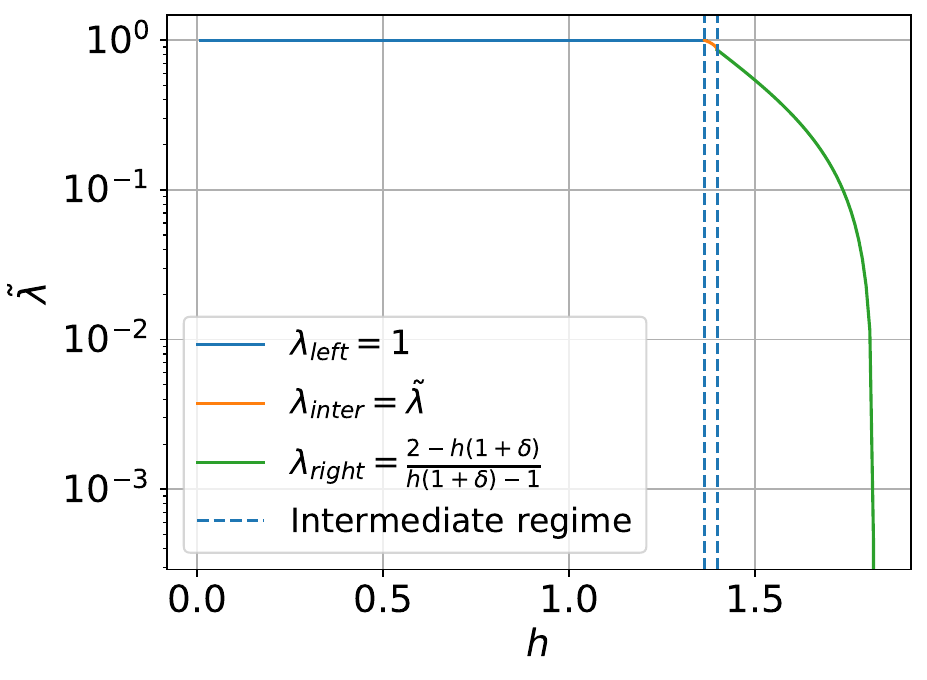}
        \caption{$\delta = 0.1$}
    \end{subfigure}
    \begin{subfigure}{0.32\textwidth}
        \includegraphics[width=\textwidth]{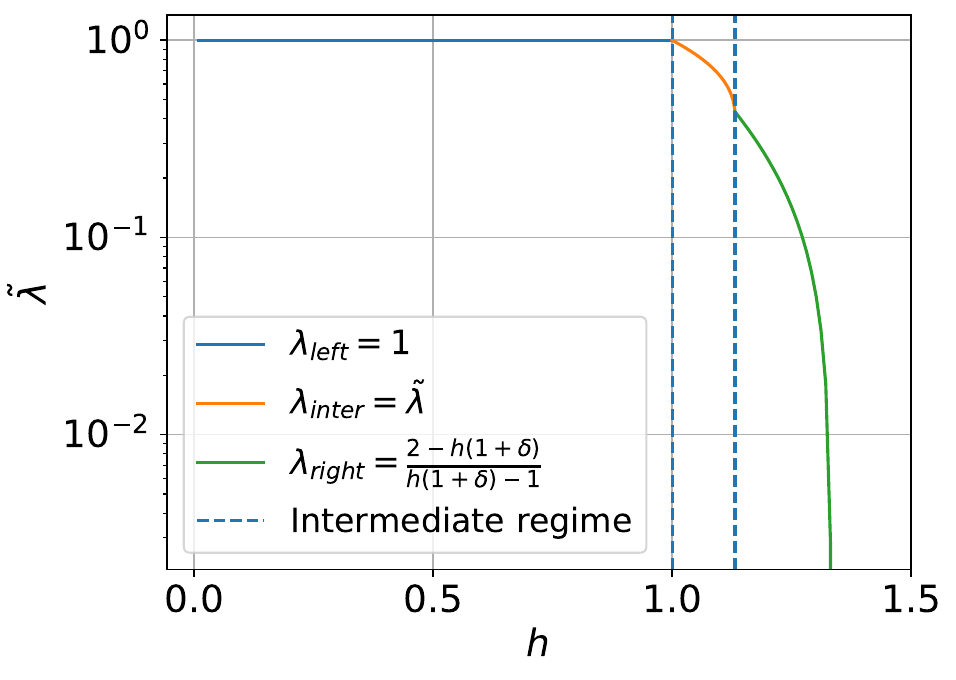}
        \caption{$\delta = 0.5$}
    \end{subfigure}
    \begin{subfigure}{0.32\textwidth}
        \includegraphics[width=\textwidth]{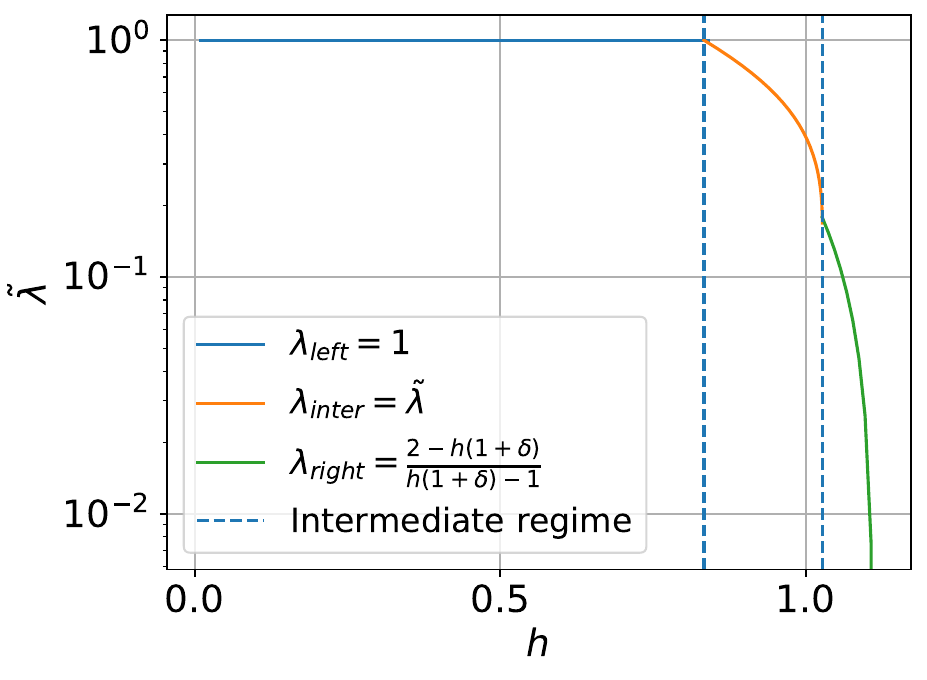}
        \caption{$\delta = 0.8$}
    \end{subfigure}
    \caption{\centering Multiplier $\lambda(h)$ from the proof of Theorem \ref{theorem::full_th_inexact_one_step} for different values of $\delta$ along the three regimes. This shows that the proof is \emph{continuous} over all step sizes $\in [0,\frac{2}{1+\delta}]$.}
    \label{fig::lambda}
\end{figure}

Compared to the exact case, where two stepsize regimes naturally arise (short-step and long-step), an interesting phenomenon occurs in the inexact setting: the emergence of a third distinct regime. We refer to the first regime, $h \in \left[0,\frac{3}{2(1+\delta)}\right)$, as the left or short-step regime. It is followed by the intermediate regime, $h \in \left[\frac{3}{2(1+\delta)}, \frac{3\delta+2-\sqrt{4-3\delta^2}}{2\delta(\delta+1)}\right]$, and finally the right or long-step regime for $h \in [\frac{3\delta+2-\sqrt{4-3\delta^2}}{2\delta(\delta+1)}, \frac{2}{1+\delta}]$.

The short-step and long-step regimes can be seen as natural extensions of the exact case. However, the intermediate regime is entirely new and arises uniquely from the presence of inexactness in the gradient. It does not correspond to any continuous extension of the behavior seen in the exact setting. Instead, it appears as a separate solution branch resulting from the structure of the positivity constraints in the semidefinite programming formulation. Interestingly, numerical experiments show that a similar tripartite regime structure also appears in settings with absolute inexactness, though a thorough analysis of that case falls outside the scope of this paper.

An important point is that this intermediate regime, while narrow, contains the optimal stepsize, i.e., the one that minimizes the worst-case convergence rate. However, characterizing this optimal value analytically is challenging: it results from solving the third-order polynomial equation from Definition \ref{def::tilde_lambda} with coefficients depending on both the stepsize $h$ and the inexactness parameter $\delta$. These equations are generally not solvable in closed form, even with symbolic tools, making a precise analytical expression for the optimal rate intractable.

We observe this behavior clearly in Figure \ref{fig:rate_with_zoom}, which shows the convergence rate curves for three different values of $\delta$. The zoom-in highlights the intermediate regime, revealing a sharp minimum in the rate curve. This confirms that choosing a stepsize within the intermediate regime yields the best performance. Moreover, a stepsize chosen too large (i.e., beyond the end of the intermediate regime) leads to a rapid deterioration of the convergence rate. This illustrates a form of asymmetry: it's safer to err on the side of smaller stepsizes, as overly aggressive steps can significantly degrade performance. This behavior is similar to what is observed in the exact case \cite{taylor2017smooth, vernimmentight}.

\begin{figure}[H]
    \centering
\begin{subfigure}{0.32\textwidth}
  \includegraphics[width=\linewidth]{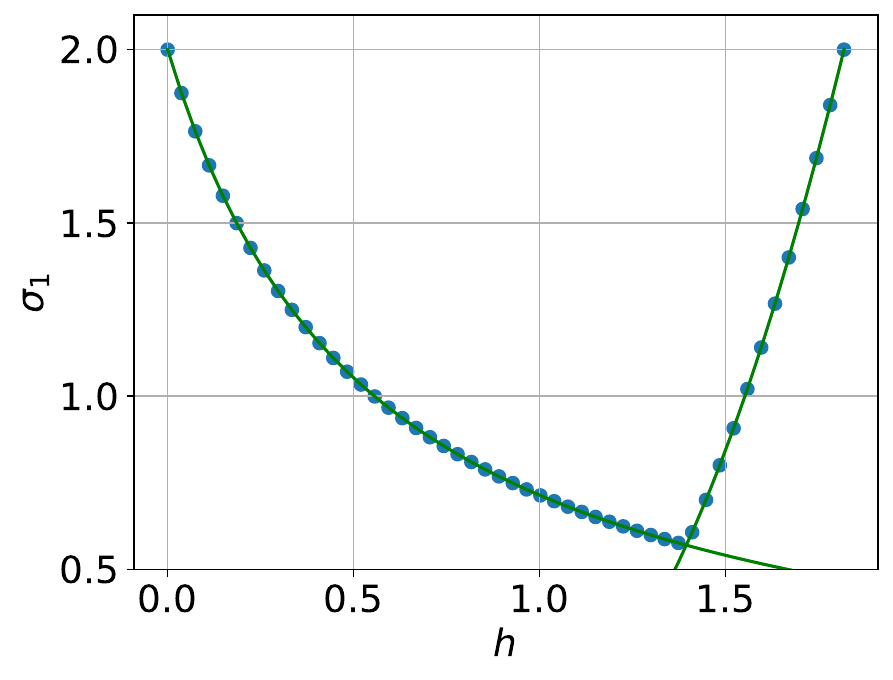}
  \caption{\centering $\delta=0.1$, $h \in \left[0, \frac{2}{1+\delta}\right]$}
\end{subfigure}\hfil % 
\begin{subfigure}{0.32\textwidth}
  \includegraphics[width=\linewidth]{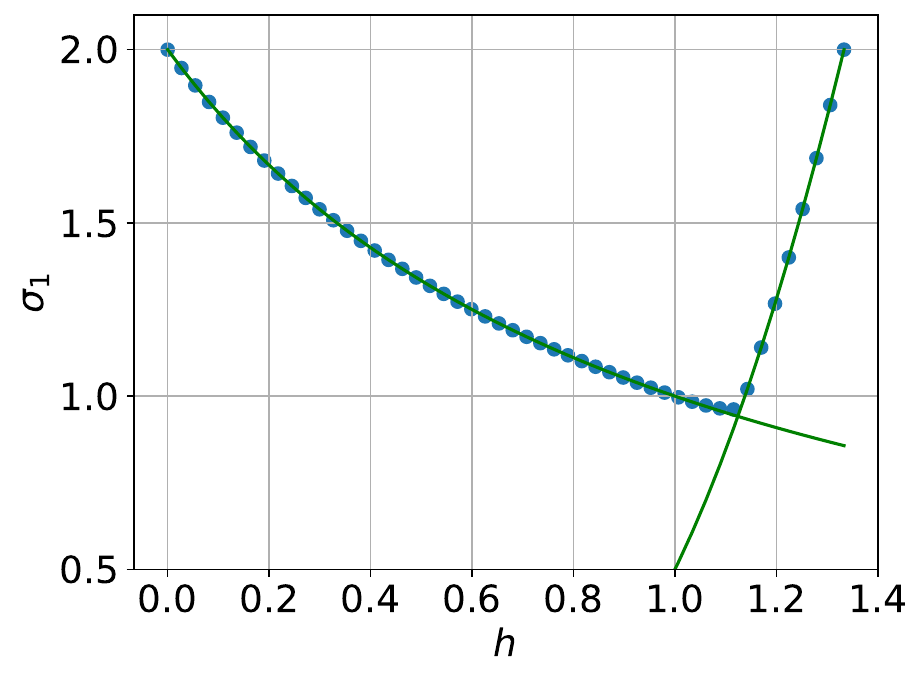}
  \caption{\centering $\delta=0.5$, $h \in \left[0, \frac{2}{1+\delta}\right]$}
\end{subfigure}\hfil 
\begin{subfigure}{0.32\textwidth}
  \includegraphics[width=\linewidth]{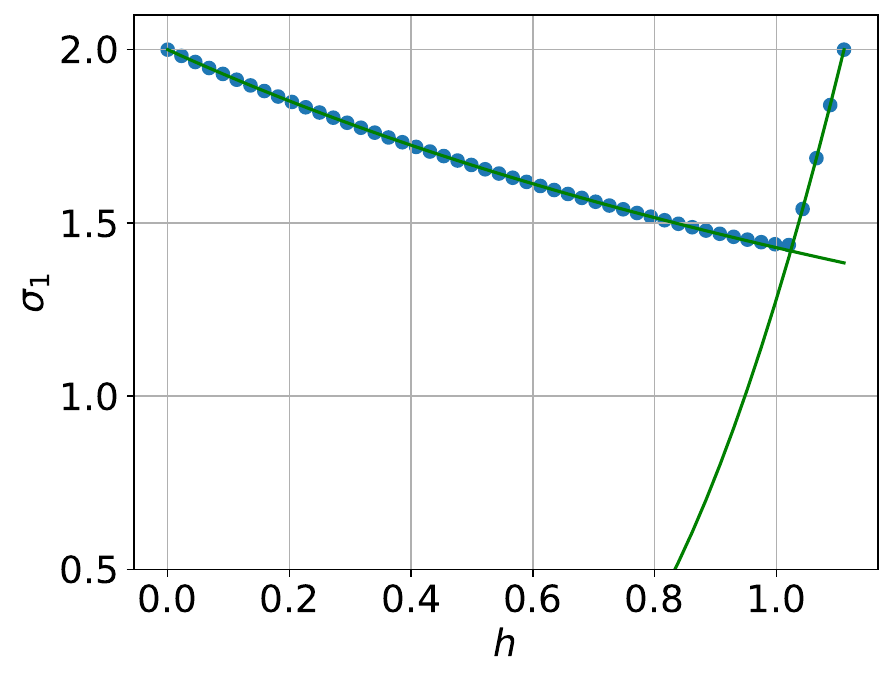}
  \caption{\centering $\delta=0.8$, $h \in \left[0, \frac{2}{1+\delta}\right]$}
\end{subfigure}

\medskip
\begin{subfigure}{0.32\textwidth}
  \includegraphics[width=\linewidth]{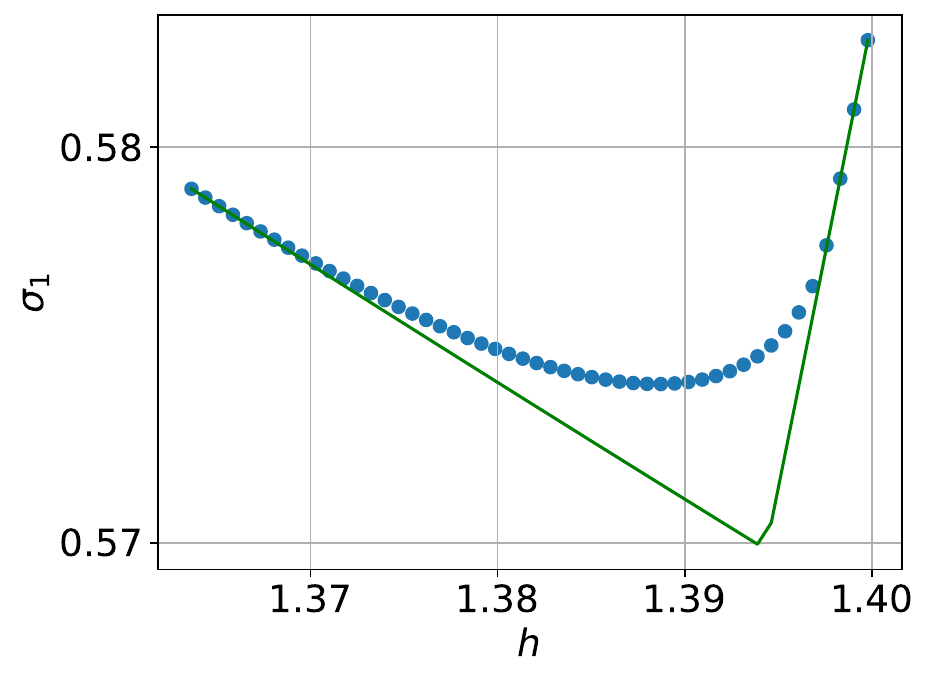}
  \caption{\centering $\delta=0.1$, $h \in \left[\frac{3}{2(1+\delta)}, \frac{3\delta+2-\sqrt{4-3\delta^2}}{2\delta(\delta+1)}\right]$}
\end{subfigure}\hfil 
\begin{subfigure}{0.32\textwidth}
  \includegraphics[width=\linewidth]{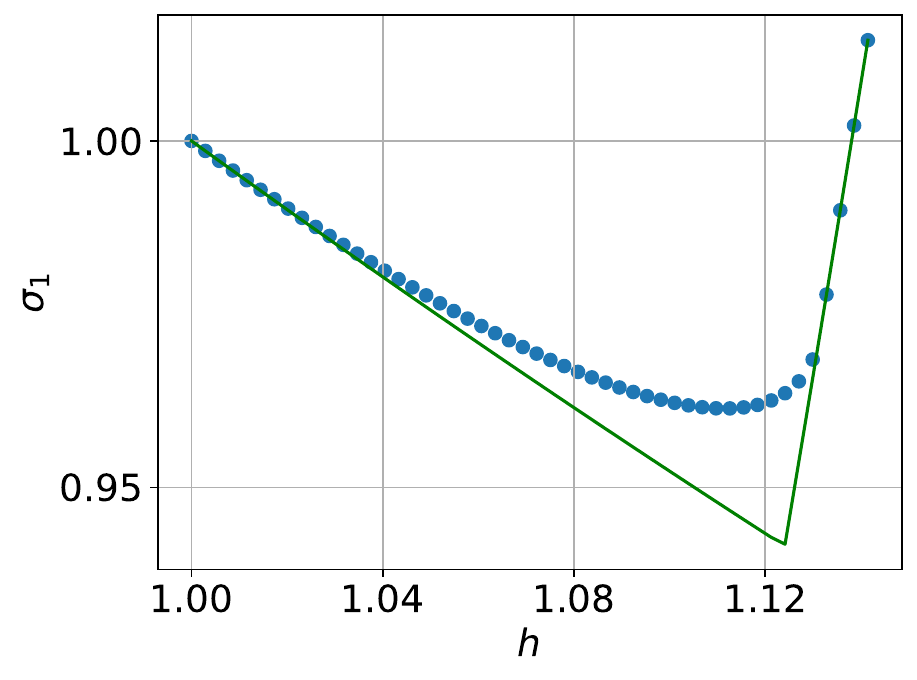}
  \caption{\centering $\delta=0.5$, $h \in \left[\frac{3}{2(1+\delta)}, \frac{3\delta+2-\sqrt{4-3\delta^2}}{2\delta(\delta+1)}\right]$}
\end{subfigure}\hfil 
\begin{subfigure}{0.32\textwidth}
  \includegraphics[width=\linewidth]{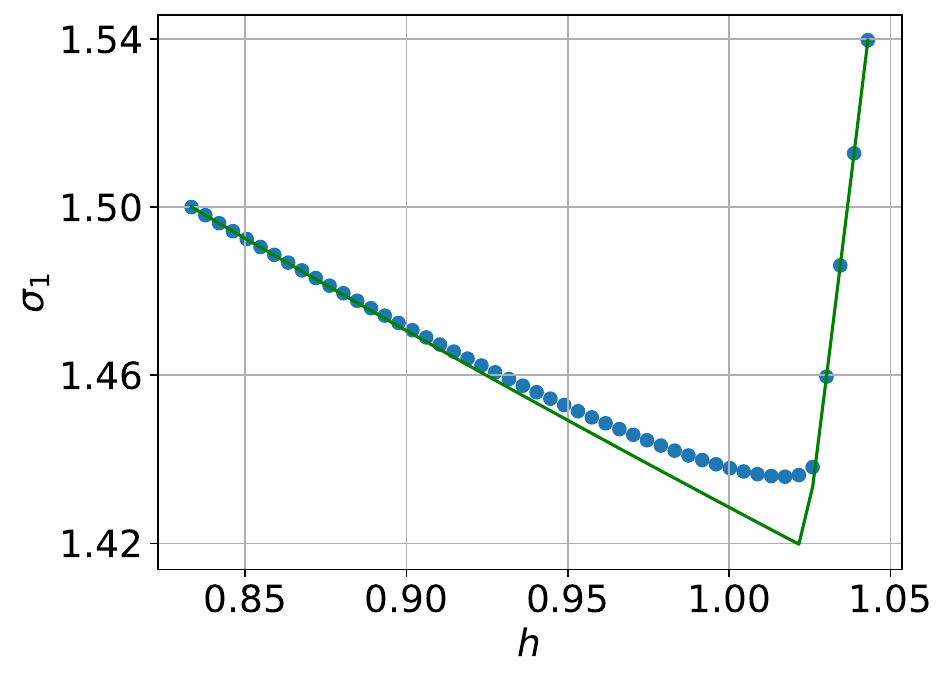}
  \caption{\centering $\delta=0.8$, $h \in \left[\frac{3}{2(1+\delta)}, \frac{3\delta+2-\sqrt{4-3\delta^2}}{2\delta(\delta+1)}\right]$}
\end{subfigure}
\caption{\centering (top) Convergence rate (blue dots) for different levels of inexactness for all stepsizes $h \in \left[0, \frac{2}{1+\delta}\right]$, and (bottom) zooming on the intermediate regime. Solid lines are the natural extension of the exact case and correspond to the other regimes.}
\label{fig:rate_with_zoom}
\end{figure}

We emphasize another key structural difference between the exact and relatively inexact settings: the region of guaranteed convergence for the normalized stepsize \( h \) shrinks as the inexactness increases. In the exact case (\( \delta = 0 \)), convergence is ensured for all \( h < 2 \). However, under relative inexactness, convergence is guaranteed only when \( h < \frac{2}{1+\delta} \). This dependence on \( \delta \) reflects a fundamental limitation introduced by the errors: the maximal admissible stepsize necessarily decreases with increasing inexactness. This qualitative behavior, specific to the relatively inexact setting, highlights the importance of adapting the stepsize to the inexactness level. In particular, the factor \( \frac{1}{1+\delta} \leq 1 \) appears repeatedly in the analysis, and has motivated an empirical strategy of shortening stepsizes by this factor in \cite{vernimmen2025empirical}, with encouraging results. A broader discussion on stepsize selection over multiple iterations is provided after Figure \ref{fig:lower_upper_several_steps} in the next section.

Quantitatively speaking, our analysis also significantly improves upon the result of \cite{vasin2024gradient} (Theorems 2.4 and 2.5), where convergence is only established for $h < h_{\text{max}}=2\left(\frac{1-\delta}{1+\delta}\right)^{3/2} $. Our maximal stepsize $h_{\text{max}}=\frac{2}{1+\delta}$ allows for substantially larger values of $h$, especially when $\delta$ is large. This improvement is illustrated in Figure \ref{fig::h_max}, where the two bounds are compared. It confirms that our theoretical bound is not only tight but also more permissive than previous results, allowing for faster convergence in practice without compromising stability.

\begin{figure}[H]
    \centering
    \includegraphics[width=0.5\linewidth]{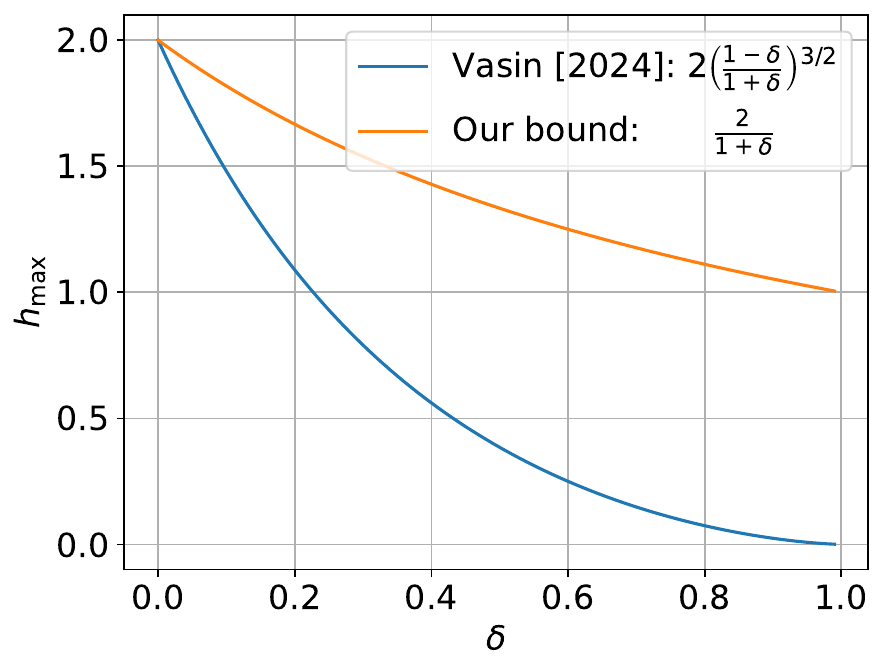}
    \caption{\centering Comparison of our $h_{\text{max}}$ for which convergence is guaranteed with the one from \cite{vasin2024gradient}.}
    \label{fig::h_max}
\end{figure}

\subsection{Tightness of the rate in the left and right regimes}

After establishing worst-case convergence rates for the inexact gradient descent in Theorem \ref{theorem::full_th_inexact_one_step}, we now show that these rates cannot be improved. For the left and right regimes, we provide an explicit function and a specific inexact direction for which the behavior of the method exactly matches the theoretical bound. In the intermediate regime, while we could not identify a similar explicit worst-case instance, we conjecture that the rate is also tight based on strong numerical evidence. %Such tightness results are essential: they certify that the bounds are not only valid, but also optimal—no better universal guarantee is possible under the given assumptions.
We formalize this in the following result:

\begin{theorem}[Tightness of left and right regimes]\label{th::tight_inexact}
    The left and right rates in Theorem \ref{theorem::full_th_inexact_one_step} are tight. Moreover, they are attained by univariate worst-case functions: a Huber function for the left regime and a quadratic function for the right regime.
\end{theorem}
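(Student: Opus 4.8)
The plan is to prove tightness constructively: for each regime I will exhibit an explicit $1$-smooth convex function (we may take $L=1$ by the homogeneity argument used throughout), a starting iterate $x_0$, and an admissible inexact direction $d_0$ \emph{saturating} the inexactness constraint $\|d_0-g_0\|=\delta\|g_0\|$, for which the one-step inequality of Theorem \ref{theorem::full_th_inexact_one_step} holds with equality. Since that theorem already provides the matching upper bound, producing a single instance attaining it suffices to establish tightness. The central observation guiding the construction is that if $d_0$ is chosen collinear with $g_0$, then the inexact update becomes an \emph{exact} gradient step with a modified (effective) stepsize: taking $d_0=(1-\delta)g_0$ yields $x_1=x_0-\tfrac{h(1-\delta)}{L}g_0$ (an undershooting step with effective stepsize $h_{\mathrm{eff}}=h(1-\delta)$), while taking $d_0=(1+\delta)g_0$ yields the overshooting step with $h_{\mathrm{eff}}=h(1+\delta)$. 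Both choices saturate the relative inexactness constraint, so tightness in the inexact setting reduces to tightness of exact gradient descent at the effective stepsize, which is exactly the content of Theorem \ref{th::exact_convergence_rate}.

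For the \emph{right regime} I will use the quadratic $f(x)=\tfrac12 x^2$ on $\mathbb{R}$ together with $d_0=(1+\delta)g_0$. A direct computation gives $x_1=(1-h(1+\delta))x_0$, hence $g_1=x_1$, and substituting into the ratios $\|g_1\|^2/(f_0-f_1)$ and $\|g_1\|^2/(f_0-f_\ast)$ (with $x_\ast=0$) reproduces precisely $C(h,\delta)=\tfrac{2}{(1-h(1+\delta))^{-2}-1}$ and $\tilde C(h,\delta)=2(1-h(1+\delta))^2$. I will check that the effective stepsize $h_{\mathrm{eff}}=h(1+\delta)$ lies in the exact long-step regime $h_{\mathrm{eff}}\in[\tfrac32,2]$ throughout the right regime: the upper bound follows from $h\le\frac{2}{1+\delta}$, and the lower bound need only be verified at the left endpoint $h=\frac{3\delta+2-\sqrt{4-3\delta^2}}{2\delta(\delta+1)}$, where $h_{\mathrm{eff}}=\frac{3\delta+2-\sqrt{4-3\delta^2}}{2\delta}\ge\tfrac32$. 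Thus the quadratic is indeed the relevant exact worst case.

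For the \emph{left regime} I will take $d_0=(1-\delta)g_0$, so that $h_{\mathrm{eff}}=h(1-\delta)<\tfrac{3(1-\delta)}{2(1+\delta)}\le\tfrac32$ places us in the exact short-step regime, and use the Huber function $f(x)=\tfrac12 x^2$ for $|x|\le\mu$ and $f(x)=\mu|x|-\tfrac12\mu^2$ otherwise, which is $1$-smooth and convex with $f_\ast=0$. Starting from $x_0=(h_{\mathrm{eff}}+1)\mu$ in the linear branch gives $g_0=\mu$ and $x_1=x_0-h_{\mathrm{eff}}\mu=\mu$, landing exactly at the kink, where $f$ is differentiable with $g_1=\mu$. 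Then $f_0-f_1=h_{\mathrm{eff}}\mu^2$, $f_0-f_\ast=(h_{\mathrm{eff}}+\tfrac12)\mu^2$, and $\|g_1\|^2=\mu^2$, so the two ratios equal $1/h_{\mathrm{eff}}=1/(h(1-\delta))$ and $1/(h_{\mathrm{eff}}+\tfrac12)$, matching $C$ and $\tilde C$ in the left regime.

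The constructions themselves are short; the main obstacle is the bookkeeping needed to make the effective-stepsize reduction airtight: one must confirm that $d_0=(1\pm\delta)g_0$ is admissible (the constraint is active) and, more delicately, that the maps $h\mapsto h(1-\delta)$ and $h\mapsto h(1+\delta)$ send the left and right inexact regimes into the corresponding exact regimes over their entire range, so that the correct branch of Theorem \ref{th::exact_convergence_rate} applies. This boundary matching is exactly where the algebra of the threshold $\frac{3\delta+2-\sqrt{4-3\delta^2}}{2\delta(\delta+1)}$ enters. Finally, I note that both worst cases are univariate, as claimed, and that the same collinear trick fails in the intermediate regime: there the PEP optimum is attained by a genuinely non-collinear $d_0$ (reflecting the new solution branch discussed after Theorem \ref{theorem::full_th_inexact_one_step}), which is why no simple closed-form worst-case function is available and tightness there rests on numerical evidence.
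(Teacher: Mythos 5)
Your proposal is correct and follows essentially the same route as the paper: the same univariate Huber function with the undershooting direction $d_0=(1-\delta)g_0$ for the left regime, and the same quadratic with the overshooting direction $d_0=(1+\delta)g_0$ for the right regime, with matching ratio computations (the paper verifies the $f(x_0)-f(x_\ast)$ bound and notes the $f(x_0)-f(x_1)$ bound is analogous, while you do both explicitly). Your effective-stepsize framing $h_{\mathrm{eff}}=h(1\pm\delta)$ and the regime-boundary checks are a nice organizing device, but they appear in the paper only as an interpretive remark after the theorem, since the direct computations already suffice.
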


\begin{proof}
We prove the tightness for the second rate of Theorem \ref{theorem::full_th_inexact_one_step} (the one with $f(x_0)-f(x_\ast)$), but the first one can be proved in a similar way (and with the same function).

    \begin{description}
        \item[Left regime $h\leq \frac{3}{2(1+\delta)}$]
        We consider a univariate Huber-like function, which is $1$-smooth and convex

     \begin{equation*}
     f(x) =
    \left\{
    \begin{array}{rl}
   \tfrac{1}{\sqrt{h(1-\delta)+\tfrac{1}{2}}} |x| - \tfrac{1}{2h(1-\delta)+1} ~~\text{, when } |x| \geq \tfrac{1}{\sqrt{h(1-\delta)+\tfrac{1}{2}}} \\
    \frac{x^2}{2}~~\text{, when } |x| < \tfrac{1}{\sqrt{h(1-\delta)+\tfrac{1}{2}}}
    \end{array}
    \right. 
    \end{equation*}

    which admits a unique minimizer for $x=0$ and $f(0)=0$.
     
    \noindent Assuming that $x_0 \geq \frac{1}{\sqrt{h(1-\delta)+0.5}}$, we have $     \nabla f(x_0) = \frac{1}{\sqrt{h(1-\delta)+\tfrac{1}{2}}}$. Choosing $d_0 = (1-\delta)\nabla f(x_0)$, which clearly satisfies the definition of relative $\delta$-inexactness, iterate $x_1$ is
    
    \begin{equation*}
        x_1 = x_0 - \frac{h(1-\delta)}{\sqrt{h(1-\delta)+\tfrac{1}{2}}}.
    \end{equation*}
    
    Now choosing $x_0$ such that $x_1 = \frac{1}{\sqrt{h(1-\delta)+\tfrac{1}{2}}}$, i.e. $x_0 = \frac{h(1-\delta)}{\sqrt{h(1-\delta)+\tfrac{1}{2}}} + \frac{1}{\sqrt{h(1-\delta)+\tfrac{1}{2}}}$, 
    it is straightforward to check that $f(x_0)-f(x_\ast) = 1$ and we have shown that
    
    \begin{equation*}
    \begin{aligned}
        \|g_1\|^2 &= \frac{1}{h(1-\delta)+\tfrac{1}{2}}\\
            &= \frac{f(x_0)-f(x_\ast)}{h(1-\delta)+\tfrac{1}{2}}.
    \end{aligned}
    \end{equation*}
    
    Since this matches the convergence rate from Theorem~\ref{theorem::full_th_inexact_one_step} exactly, this proves tightness on the left regime.
    
    \item[Right regime $h\geq \frac{3\delta+2-\sqrt{4-3\delta^2}}{2\delta(\delta+1)}$]
    We now turn to the following univariate quadratic function
    \begin{equation*}
        f(x)=\frac{x^2}{2},
    \end{equation*}
    whose minimizer is $x=0$, with corresponding function value $f(0)=0$.
    Its gradient is $\nabla f(x)=x$. Furthermore choosing $d_0=(1+\delta)\nabla f(x_0)=(1+\delta)x_0$, satisfying the definition of relative $\delta$-inexactness leads to iterate $x_1$ such that
    \begin{equation*}
    \begin{aligned}
        x_1 &= x_0 - hd_0\\
            &= x_0 - h x_0(1+\delta)\\
            &= x_0 (1-h(1+\delta)) 
    \end{aligned}
    \end{equation*}

    To get $f(x_0)-f(x_\ast)=f(x_0)=1$, we thus need $x_0^2=2$. Iterate
    $x_1$ can thus be computed straightforwardly, and we get
    \begin{equation*}
        \begin{aligned}
            \|g_1\|^2=x_1^2&=x_0^2 (1-h(1+\delta))^2\\
            &= 2 (1-h(1+\delta))^2\\
            &= 2 (1-h(1+\delta))^2(f(x_0)-f(x_\ast))
        \end{aligned}
    \end{equation*}
    again matching the theoretical bound in Theorem \ref{theorem::full_th_inexact_one_step}. Tightness is proven.
    \end{description}
\end{proof}

The above result closely mirrors what occurs in the exact gradient descent setting, where the worst-case for the left and right regimes also happens respectively for a Huber and a quadratic function (see e.g. \cite{rotaru2024exact}). Interestingly, the worst-case inexact directions satisfy a very specific structure for those left and right regimes,  
\[
d_0 = (1 \pm \delta) \nabla f(x_0).
\]  
This observation leads to the following remark, which offers an intuitive interpretation of the worst-case behavior.

\begin{remark}
    When the inexact direction takes the form \( d_0 = (1 \pm \delta) \nabla f(x_0) \), the update step of inexact gradient descent can be reinterpreted as an exact gradient step with a miscalibrated stepsize
    \begin{equation*}
    \begin{aligned}
        x_1 &= x_0 - h d_0 \\
        &=  x_0 - h_{\text{eff}} \nabla f(x_0), \quad \text{where } h_{\text{eff}} = h(1 \pm \delta).
    \end{aligned}
    \end{equation*}
    In other words, the inexactness effectively scales the stepsize, leading to either a too conservative or too aggressive update.
    
    This provides a useful perspective: in the worst-case scenarios, relative inexactness does not merely distort the direction of descent, it acts as a perturbation of the stepsize itself. As such, the worst possible errors under a relative inexactness model can be interpreted as poor stepsize tuning, either undershooting or overshooting the optimal update.

    This phenomenon is illustrated in Figure \ref{fig:inexact_on_wc_functions}, which displays the behavior of exact and inexact gradient descent on the 1D worst-case functions identified in Theorem \ref{th::tight_inexact}.
\end{remark}

\begin{figure}[H]
    \centering
    \begin{subfigure}{0.49\textwidth}
        \includegraphics[width=\textwidth]{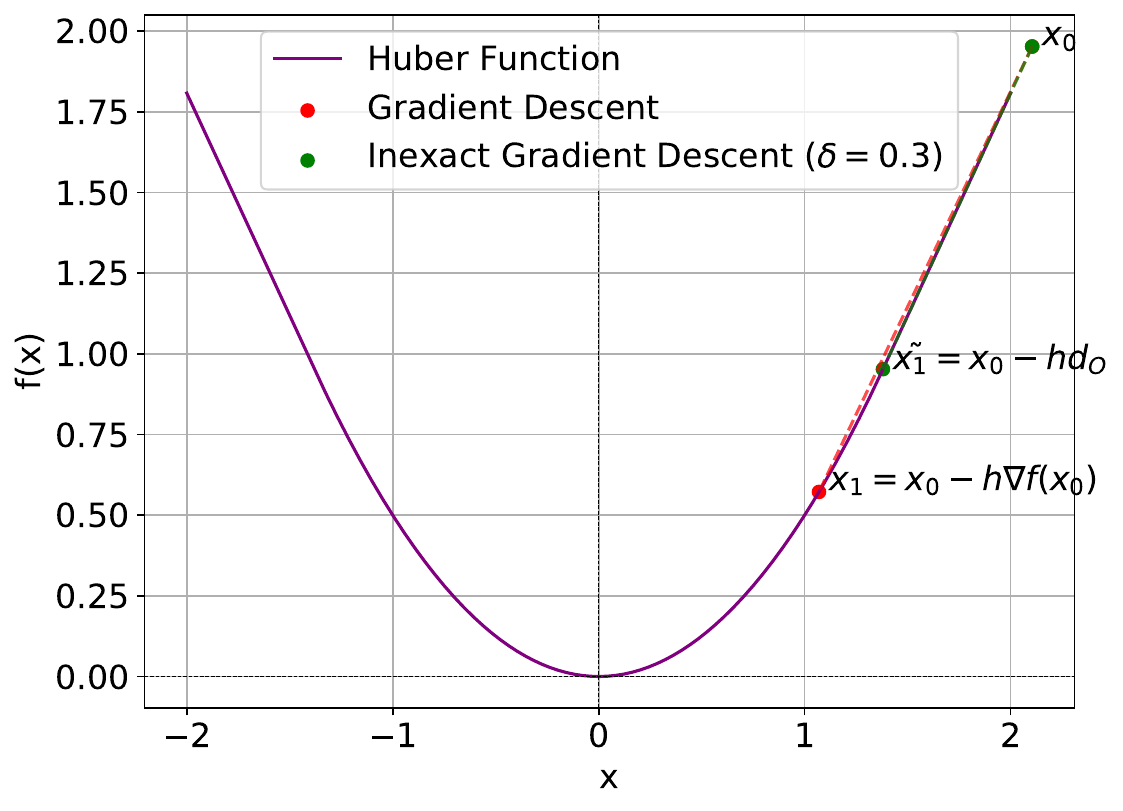}
        \caption{Left (undershoot) regime ($h=0.75$)}
    \end{subfigure}
    \begin{subfigure}{0.49\textwidth}
        \includegraphics[width=\textwidth]{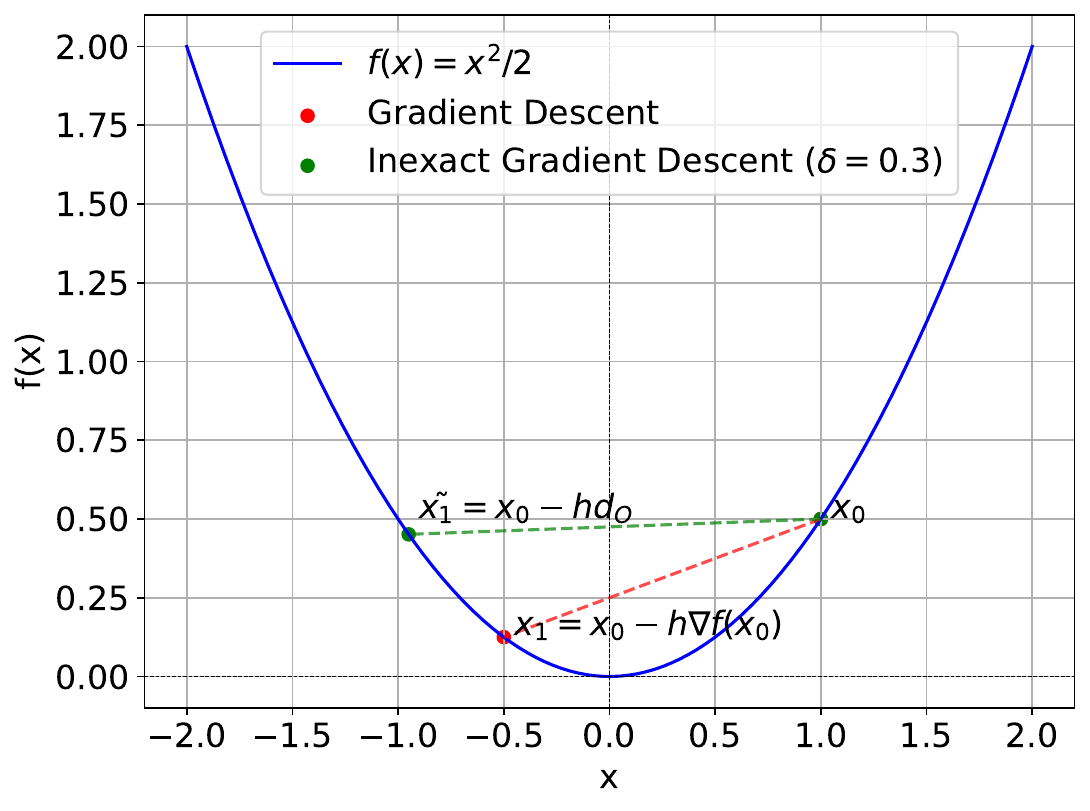}
        \caption{Right (overshoot) regime ($h=1.5$)}
    \end{subfigure}
    \caption{Exact and inexact ($\delta=0.3$) gradient descent on worst-case functions for left and right regimes.}
    \label{fig:inexact_on_wc_functions}
\end{figure}
From these plots, we observe a clear pattern: in the left regime, where the stepsize is already small, the worst-case effect of relative inexactness is to make the step even shorter---further slowing down convergence. Conversely, in the right regime, where the stepsize is large, the worst-case is to amplify it even more---leading to larger overshooting.

This interpretation links naturally to earlier insights on gradient descent sensitivity to stepsize, such as those discussed around Figure 3 in \cite{taylor2017smooth}. Moreover, similar patterns of behavior seem to emerge when the method is run over multiple iterations, suggesting that the link between relative inexactness and stepsize miscalibration persists across time and may be a core feature of such methods.
\subsection{Tightness of the rate in the intermediate regime}
We now focus on the intermediate regime, where the situation is more subtle. Indeed, no univariate function seems able to match the bound. As we explain below, this appears to be a structural limitation: achieving the worst-case rate requires geometric flexibility that only arises in dimension two or higher. This leads us to formulate the following conjecture:

\begin{conjecture}[Tightness of intermediate regime]
    The intermediate regime in Theorem \ref{theorem::full_th_inexact_one_step} is tight. Furthermore, the corresponding worst-case function is bivariate.
\end{conjecture}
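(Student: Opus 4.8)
The plan is to work on the primal side of the performance estimation SDP and leverage the dual certificate already constructed in the proof of Theorem~\ref{theorem::full_th_inexact_one_step}. As explained there, the whole one-step problem reduces to a Gram matrix $G$ of the triple $(g_0,g_1,\td)$ (positions enter only through $x_1=x_0-h(g_0+\delta\td)$ and through inner products with $g_\ast=0$), and the dimension of any worst-case function equals $\operatorname{rank}(G)$. In the intermediate regime the optimal multipliers $\lambda_{01}=\tilde\lambda+1$, $\lambda_{10}=\tilde\lambda$ and $b=b(\tilde\lambda)$ are strictly positive, and at the optimum $A_1=0$, so the dual slack matrix collapses to the rank-one matrix $A=A_2=vv^\top$. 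By semidefinite duality, any attained primal optimum $G^\star$ obeys $\langle G^\star,A\rangle=0$, hence $G^\star A=0$ and $\operatorname{range}(G^\star)\subseteq\ker A$. Since $A$ has rank one, $\ker A$ is two-dimensional, which already yields $\operatorname{rank}(G^\star)\le 2$: the worst case can always be realized in dimension at most two.

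The second step rules out $\operatorname{rank}(G^\star)=1$, i.e.\ excludes a univariate worst case. Because $b>0$ throughout the intermediate interval, complementary slackness forces $Q_{\mathrm{inexact}}$ to be active, so $\|\td\|=\|g_0\|$; a rank-one $G^\star$ would place $g_0,g_1,\td$ on a single line, forcing $\td=\pm g_0$ and therefore $d_0=(1\pm\delta)g_0$. By the Remark following Theorem~\ref{th::tight_inexact}, such a step is an exact gradient step with effective stepsize $h_{\mathrm{eff}}=h(1\pm\delta)$, whose one-step value is $\tilde C_{\mathrm{exact}}(h_{\mathrm{eff}})=\max\!\bigl(\tfrac{1}{h_{\mathrm{eff}}+1/2},\,2(1-h_{\mathrm{eff}})^2\bigr)$ from Theorem~\ref{th::exact_convergence_rate}. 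It thus suffices to prove the strict inequality $\max_{\pm}\tilde C_{\mathrm{exact}}\!\bigl(h(1\pm\delta)\bigr)<\tilde C(h,\delta)$ on the open intermediate interval; equality holds exactly at the two endpoints, which is consistent with the left- and right-regime worst cases being univariate. A collinear instance therefore cannot attain $\tilde C(h,\delta)$ in the interior, so $\operatorname{rank}(G^\star)\ge 2$, and combined with the previous bound this gives the bivariate claim.

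The remaining and decisive step is to exhibit an attained rank-two optimum, which simultaneously proves tightness. I would parametrize the two-dimensional subspace $\ker A$, and search for a Gram matrix $G^\star\succeq0$ whose range lies in $\ker A$ and which makes $Q_{01}$, $Q_{10}$ and $Q_{\mathrm{inexact}}$ hold with equality, normalized by $f_0-f_\ast=1$. These are affine constraints on the entries of $G^\star$ together with one homogeneous quadratic (the inexactness equality), and the degrees of freedom should pin $G^\star$ down up to the scaling fixed by the normalization. Recovering explicit vectors $g_0,g_1,\td\in\mathbb R^2$ by a Cholesky factorization, forming $x_1=x_0-h(g_0+\delta\td)$ and appending the minimizer data with $g_\ast=0$, the interpolation theorem of \cite{taylor2017smooth} guarantees a genuine $1$-smooth convex $f$ matching all these points, and by construction $\tfrac1L\|g_1\|^2=\tilde C(h,\delta)\,(f_0-f_\ast)$, establishing tightness and exhibiting the bivariate worst-case instance.

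The main obstacle is exactly this explicit construction, because the intermediate regime is governed by $\tilde\lambda$, the largest root of the cubic of Definition~\ref{def::tilde_lambda}, for which no usable closed form exists. Substituting a radical expression for $\tilde\lambda$ into the equality conditions and into the strict comparison of the second step produces algebraic relations that resist symbolic simplification, which is precisely why the statement remains a conjecture. A promising route is to keep $\tilde\lambda$ implicit and verify both the tightness identity $\tfrac1L\|g_1\|^2=\tilde C(h,\delta)(f_0-f_\ast)$ and the strict inequality of the second step \emph{modulo} the defining cubic, reducing every polynomial relation in $\tilde\lambda$ by that cubic rather than attempting to evaluate $\tilde\lambda$ in closed form.
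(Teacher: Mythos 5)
First, a point of framing: the paper does not prove this statement at all --- it is left as a conjecture, supported only by numerical evidence (PEP values computed on a grid of stepsizes matching the bound, and a specific bivariate construction with $\langle d_0-\nabla f(x_0),\nabla f(x_0)\rangle=0$ that matches the bound at the optimal stepsize, while 1D constructions fall strictly short). Your proposal is therefore not competing with an existing proof; it is a roadmap toward one. As a roadmap it is sensible and consistent with the paper's observations, but it does not close the conjecture: the two decisive steps --- (i) the strict inequality $\max_{\pm}\tilde C_{\mathrm{exact}}\bigl(h(1\pm\delta)\bigr)<\tilde C(h,\delta)$ on the open intermediate interval, and (ii) the existence of an attained rank-two primal optimum achieving the bound --- are exactly the content of the conjecture, and you leave both unperformed, blocked (as you note) by the absence of a usable closed form for $\tilde\lambda$. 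What you have actually supplied is a reduction of the conjecture to two algebraic facts modulo the cubic of Definition \ref{def::tilde_lambda}, not a proof of them.

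There is also a logical gap in the first step as written. The assertion ``by semidefinite duality, any attained primal optimum $G^\star$ obeys $\langle G^\star,A\rangle=0$'' invokes complementary slackness between the primal optimum and the multipliers $(\tilde\lambda+1,\tilde\lambda,b)$ from the paper's proof. Complementary slackness applies only if those multipliers are dual \emph{optimal}, i.e., only if the dual value $\tilde C(h,\delta)$ equals the primal optimal value --- which is precisely the tightness you are trying to establish. If the bound were loose, the paper's certificate would be merely dual feasible, $\langle G^\star,A\rangle$ could be strictly positive, and the conclusion $\operatorname{rank}(G^\star)\le 2$ would have no basis. So the kernel argument can only serve as heuristic guidance for where to search for a worst-case instance; the entire burden of proof falls on the explicit construction, which then renders the rank bound logically superfluous. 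Two smaller issues: ruling out a univariate worst case requires considering all collinear errors $d_0=c\,g_0$ with $c\in[1-\delta,1+\delta]$, not only the saturated ones $c=1\pm\delta$ (your appeal to activity of $Q_{\mathrm{inexact}}$ is again the circular complementarity; unimodality of the exact-GD rate rescues the reduction to endpoints, but this must be said); and the bivariate claim concerns the interpolated \emph{function}, whose dimension is governed by $\operatorname{span}\{g_0,g_1,\td,x_\ast-x_0\}$ --- the vector $x_\ast-x_0$ is invisible in the certificate only because $g_\ast=0$, so even granting $\operatorname{rank}(G^\star)\le2$ you must still argue that $x_\ast$ can be placed inside that two-dimensional subspace.
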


This conjecture is supported by strong numerical evidence obtained by solving the PEP. More specifically, for any $\delta\in (0,1)$, we compute the numerical tight worst-case rate over a fine grid of stepsizes $h$ belonging to the intermediate regime, and observe that all of them perfectly match the bound of Theorem \ref{theorem::full_th_inexact_one_step}.
%for all tested values
In addition, we observe that a specific two-dimensional construction achieves a \emph{worse} convergence rate than any univariate candidate. This suggests that the true worst-case behavior in this regime fundamentally requires two dimensions.

To make this insight precise, assume for contradiction that the worst case is realized by a univariate function. Then, without loss of generality, we may take
\[
\nabla f(x_0) = \begin{pmatrix} a \\ 0 \end{pmatrix}, \quad d_0 = \begin{pmatrix} \tilde{a} \\ 0 \end{pmatrix},
\]
and saturate the relative inexactness condition, leading to \( \|d_0 - \nabla f(x_0)\| = \delta \|\nabla f(x_0)\| \), so that \( \tilde{a} \in [(1-\delta)a, (1+\delta)a] \). 
Now, consider instead the bivariate construction
\[
\nabla f(x_0) = \begin{pmatrix} a \\ 0 \end{pmatrix}, \quad
d_0 = \begin{pmatrix} (1-\delta^2)a \\ \delta \sqrt{1-\delta^2}a \end{pmatrix},
\]
which also saturates the inexactness condition \eqref{ine}, and enforces that the error $d_0-\nabla f(x_0)$ is orthogonal to the gradient, i.e. $\langle d_0 - \nabla f(x_0), \nabla f(x_0) \rangle = 0$. Orthogonality is possible due to some rotational freedom that is not available in one dimension. We then maximize the final gradient norm \( \|\nabla f(x_1)\|^2 \) over all admissible values of \( \nabla f(x_1) = (u, v)^\top \). Numerical resolution of the PEP confirms that letting both \( u \) and \( v \) vary leads to a strictly larger worst-case value than in the univariate case (where necessarily \( v = 0 \)).

This phenomenon is illustrated in Figure \ref{fig:1D_2D_WC_fct}, which compares the theoretical rate with the worst-case rates achieved by 1D and 2D constructions\footnote{Solving a PEP in fixed dimension requires a non-convex formulation, see e.g. \cite{das2024branch}.}. We observe that the proposed 2D construction matches the theoretical bound specifically at the optimal stepsize, leading to a numerical proof that the bound is tight in that case, whereas the \emph{worse} 1D constructions fall short. This provides strong support for the second part of the conjecture.

\begin{figure}[H]
    \centering
    \begin{subfigure}{0.32\textwidth}
        \includegraphics[width=\textwidth]{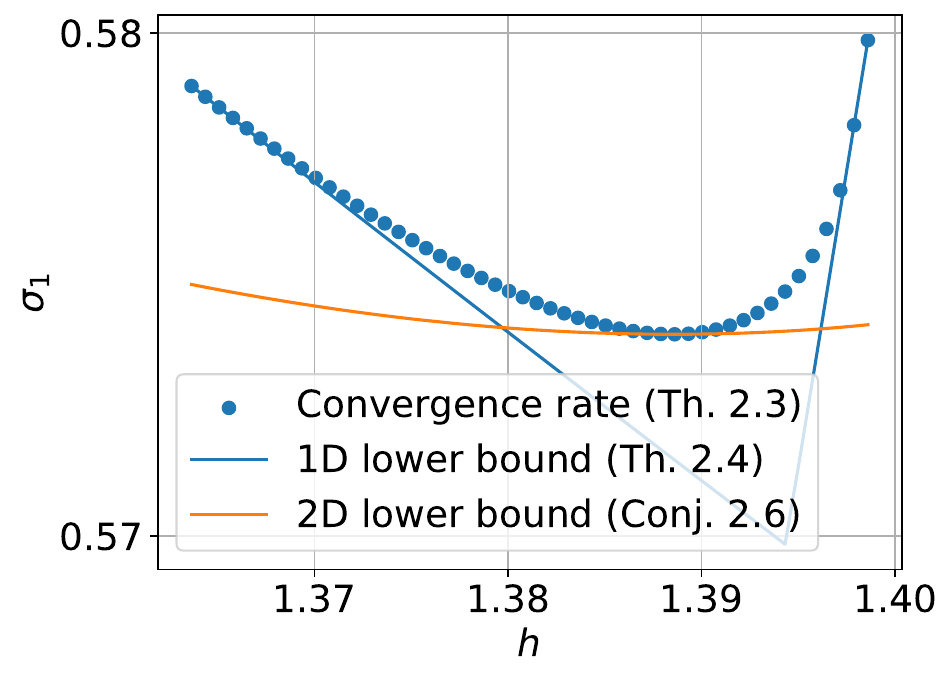}
        \caption{$\delta = 0.1$}
    \end{subfigure}
    \begin{subfigure}{0.32\textwidth}
        \includegraphics[width=\textwidth]{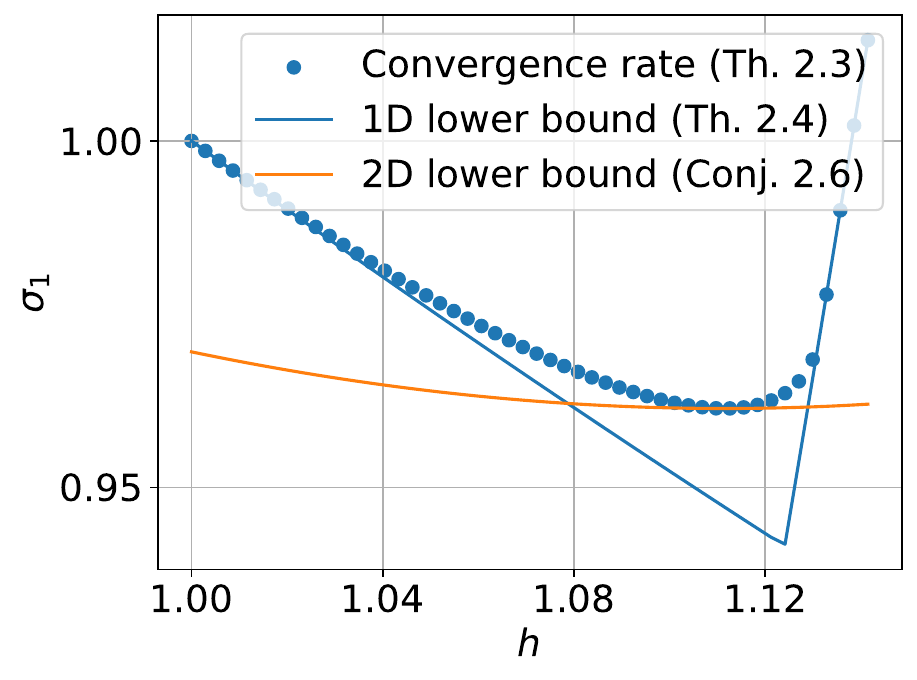}
        \caption{$\delta = 0.5$}
    \end{subfigure}
    \begin{subfigure}{0.32\textwidth}
        \includegraphics[width=\textwidth]{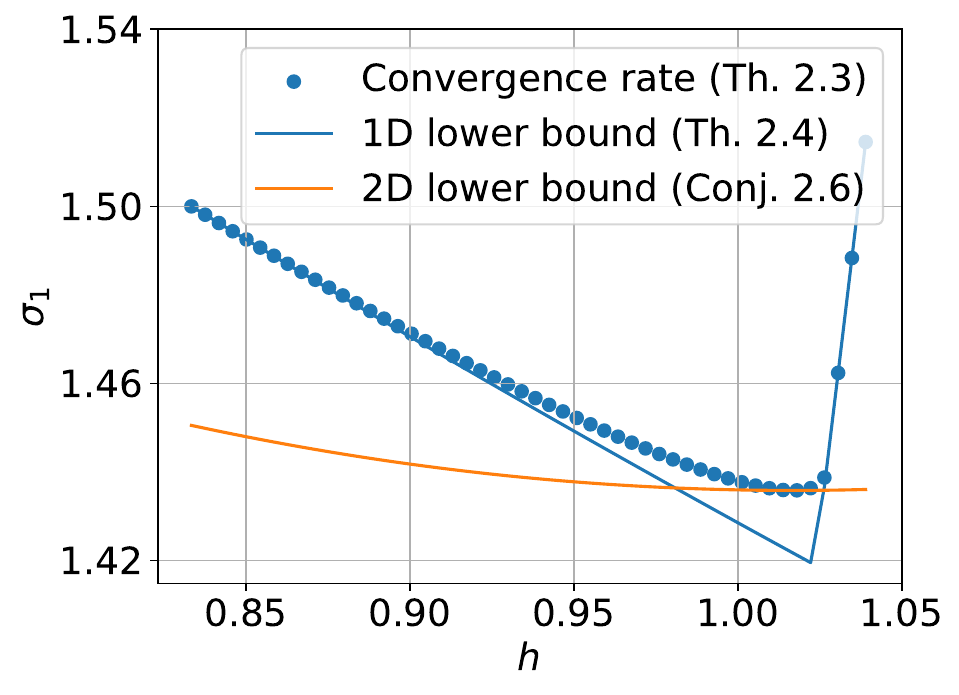}
        \caption{$\delta = 0.8$}
    \end{subfigure}
    \caption{\centering Convergence rate of intermediate regime from Theorem 
    \ref{theorem::full_th_inexact_one_step} versus 1D and 2D worst-case constructions for different values of $\delta$.}
    \label{fig:1D_2D_WC_fct}
\end{figure}

The key insight here is geometric: in the intermediate regime, the worst-case error vector \( d_0 - \nabla f(x_0) \) is no longer colinear with the true gradient. Since the relative inexactness condition \eqref{ine} constrains this error to lie within a ball of radius \( \delta \|\nabla f(x_0)\| \), the worst case occurs when the error lies on the boundary of the ball. This gives rise to a rotational degree of freedom that allows the direction error to “turn” around the gradient, leading to greater degradation in performance. This phenomenon is illustrated in Figure \ref{fig::turning_error}.

\begin{figure}[H]
    \centering
    \includegraphics[width=0.7\linewidth]{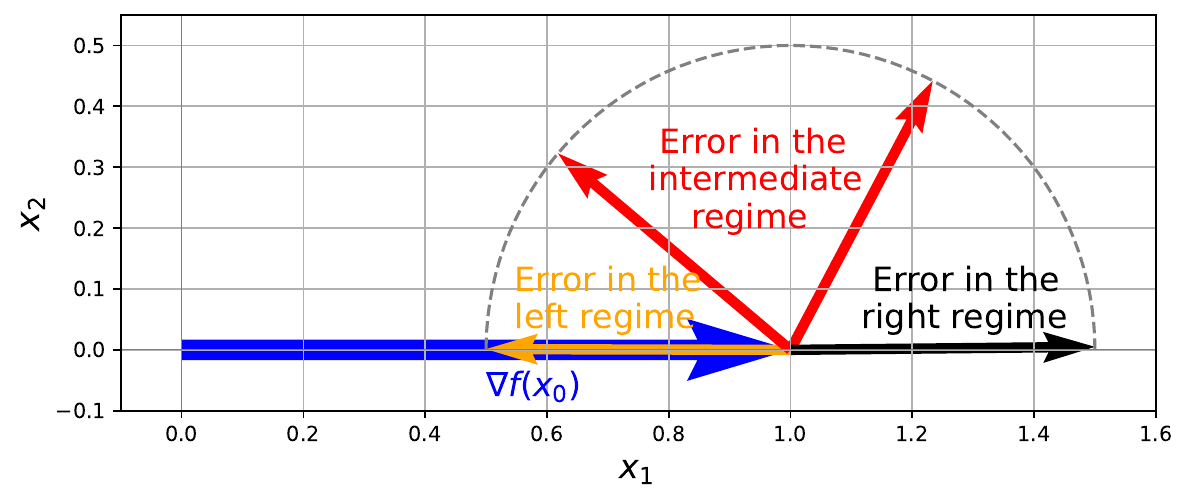}
    \caption{\centering Worst-case direction error \( d_0 - \nabla f(x_0) \) rotating around the gradient in the intermediate regime (example with normalized gradient and \( \delta=0.5 \)).}
    \label{fig::turning_error}
\end{figure}

Moreover, numerical evidence from the PEP experiments suggests that the orthogonality condition discussed above is met \emph{precisely at the optimal (best) stepsize}, i.e.,
\[
\langle d_0 - \nabla f(x_0), \nabla f(x_0) \rangle = 0 \quad \text{at the best stepsize}
\]
meaning that the worst-case error vector is orthogonal to the true gradient only when the method is tuned to its best stepsize. This condition is satisfied by the bivariate construction above and seems to be a consistent feature observed in our numerical worst-case PEP solutions. However, due to the algebraic complexity of the problem, we have not derived an explicit analytical worst-case function in this regime. Investigating whether this orthogonality condition always corresponds to the best stepsize, and how it depends on \(\delta\), is an interesting direction for future work.

\section{Generalization to several steps}\label{sec::several_steps}

Extending our analysis to multiple iterations reveals new insights, but also new challenges. While the exact convergence rate for a single step of relatively inexact gradient descent already required solving a non-trivial cubic equation, the situation becomes even more complicated when analyzing the behavior over several steps, especially in the intermediate regime of stepsizes. An exact characterization of the worst-case rate for an arbitrary number of iterations is unlikely to be obtainable analytically. However, we can derive useful and informative \emph{upper and lower bounds} that capture the behavior of the method over several iterations.

\subsection{Upper bound on the rate for several steps}

\begin{theorem}[Upper bound on the convergence rate for $N$ steps of inexact gradient descent on smooth convex functions]\label{theorem::full_th_inexact_N_steps}
    Algorithm \ref{algo::inexact_gradient} applied to a convex $L$-smooth function $f$ with an inexact gradient with relative inexactness $\delta\in(0,1)$, 
    started from iterate $x_0$ with 
    a constant stepsize $h\in[0,\frac{2}{1+\delta}]$,  generates iterates satisfying
    \[
        \frac{1}{L} \min_{k\in\{1,\cdots,N\}}\|\nabla f(x_k)\|^2 \leq \tilde{C}_N(h, \delta) (f(x_0) - f(x_\ast)),
    \]
    where $\tilde{C}_N(h, \delta)$ is given by the continuous function
        \small
\[
\tilde{C}_N(h, \delta) =
\left\{
\begin{array}{lll}
   \frac{1}{Nh(1-\delta)+\tfrac{1}{2}}, & \text{if } h \in \left[0, \frac{3}{2(1+\delta)}\right) & \text{(left regime)} \\
    \frac{2\tilde{\lambda}}{N(h\tilde{\lambda}^2+2(h-1)\tilde{\lambda}+h-1)+\tilde{\lambda}}, & \text{if } h \in \left[\frac{3}{2(1+\delta)}, \frac{3\delta+2-\sqrt{4-3\delta^2}}{2\delta(\delta+1)}\right]
        & \text{(intermediate regime)} \\
    \frac{2}{N((1-h(1+\delta))^{-2})-(N-1)}, & \text{if } h \in \left(\frac{3\delta+2-\sqrt{4-3\delta^2}}{2\delta(\delta+1)}, \frac{2}{1+\delta}\right]
        & \text{(right regime)}
\end{array}
\right.
\]
\normalsize
\end{theorem}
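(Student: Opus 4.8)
The plan is to reduce the $N$-step guarantee to a telescoping sum of $N$ single-step guarantees, each supplied by Theorem~\ref{theorem::full_th_inexact_one_step}. The crucial observation is that the one-step bound is shift-invariant: applying Theorem~\ref{theorem::full_th_inexact_one_step} with $x_0$ replaced by $x_k$ and $x_1$ by $x_{k+1}$ yields, for every $k\in\{0,\dots,N-1\}$,
\[
\tfrac{1}{L}\|\nabla f(x_{k+1})\|^2 \le C(h,\delta)\,\bigl(f(x_k)-f(x_{k+1})\bigr),
\]
where $C(h,\delta)$ is exactly the one-step constant (the first bound of Theorem~\ref{theorem::full_th_inexact_one_step}, involving $f(x_0)-f(x_1)$) and depends only on $h$ and $\delta$, not on $k$, because the setting---an $L$-smooth convex $f$, stepsize $h$, and a direction $d_k$ satisfying the relative inexactness condition \eqref{ine}---is identical at every iteration. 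Setting $L=1$ and writing $\rho=1/C(h,\delta)$, this reads $f(x_k)-f(x_{k+1})\ge \rho\,\|\nabla f(x_{k+1})\|^2$.

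First I would sum these $N$ inequalities. The left-hand side telescopes to $f(x_0)-f(x_N)$, giving
\[
f(x_0)-f(x_N) \ge \rho\sum_{k=1}^{N}\|\nabla f(x_k)\|^2 \ge \rho\,N\min_{k\in\{1,\dots,N\}}\|\nabla f(x_k)\|^2,
\]
where the second inequality replaces the sum by $N$ times its smallest term. Next I would add the smoothness-convexity inequality $f(x_N)-f(x_\ast)\ge \tfrac12\|\nabla f(x_N)\|^2$ already used at the end of the proof of Theorem~\ref{th::exact_convergence_rate}; since $\|\nabla f(x_N)\|^2\ge \min_k\|\nabla f(x_k)\|^2$, this contributes an extra $\tfrac12\min_k\|\nabla f(x_k)\|^2$. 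Combining the two yields
\[
f(x_0)-f(x_\ast) \ge \Bigl(\rho N + \tfrac12\Bigr)\min_{k\in\{1,\dots,N\}}\|\nabla f(x_k)\|^2,
\]
so that $\tilde{C}_N(h,\delta)=\bigl(\rho N+\tfrac12\bigr)^{-1}$.

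Finally I would substitute the three regime-dependent values $\rho=1/C(h,\delta)$ from Theorem~\ref{theorem::full_th_inexact_one_step} and simplify each branch. In the left regime $\rho=h(1-\delta)$ gives directly $\tilde{C}_N=\bigl(Nh(1-\delta)+\tfrac12\bigr)^{-1}$; in the right regime $\rho=\tfrac12\bigl((1-h(1+\delta))^{-2}-1\bigr)$ gives, after clearing the $\tfrac12$, the expression $2\bigl(N(1-h(1+\delta))^{-2}-(N-1)\bigr)^{-1}$; and in the intermediate regime $\rho=\tfrac{1}{2\tilde\lambda}\bigl(h\tilde\lambda^2+2(h-1)\tilde\lambda+h-1\bigr)$ gives the stated cubic-root expression $2\tilde\lambda\bigl(N(h\tilde\lambda^2+2(h-1)\tilde\lambda+h-1)+\tilde\lambda\bigr)^{-1}$. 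Each substitution is an elementary manipulation matching one branch of $\tilde{C}_N(h,\delta)$, and continuity in $h$ follows from that of $C(h,\delta)$.

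I expect the main (and essentially only) obstacle to be conceptual rather than computational: carefully justifying that the single-step worst-case constant may be reused unchanged at every iteration (shift invariance), and being transparent that the telescoping argument yields a valid but generally non-tight \emph{upper} bound. The step $\sum_k\|\nabla f(x_k)\|^2\ge N\min_k\|\nabla f(x_k)\|^2$ discards information, so the bound may be loose---particularly in the intermediate regime---which is precisely why tightness (and the accompanying lower bounds from the PEP) must be established separately rather than following from this telescoping estimate.
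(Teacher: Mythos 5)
Your proposal is correct and follows essentially the same route as the paper's proof: telescoping the one-step bound of Theorem~\ref{theorem::full_th_inexact_one_step}, lower-bounding the resulting sum by $N$ times the minimum squared gradient norm, and adding the interpolation inequality $f(x_N)-f(x_\ast)\ge\tfrac12\|\nabla f(x_N)\|^2$ to pass from $f(x_N)$ to $f(x_\ast)$. Your write-up is in fact slightly more explicit than the paper's (the shift-invariance justification and the three branch substitutions are spelled out rather than left implicit), but the argument is the same.
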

\begin{proof} 
    The first rate from Theorem \ref{theorem::full_th_inexact_one_step} can be generalized in a very straightforward manner. We use it at each iteration to write 
    \[ \frac{1}{L} \|\nabla f(x_{k+1})\|^2 \leq C(h, \delta) (f(x_k) - f(x_{k+1})),\]
     with $C(h,\delta)$ defined in the aforementioned theorem, and summing it from $k=0$ to $N-1$ leads to
        \begin{equation*}
        \begin{aligned}
            \sum_{k=0}^{N-1}\|\nabla f(x_{k+1})\|^2 &\leq \sum_{k=0}^{N-1}C(h, \delta) (f(x_k) - f(x_{k+1}))\\
            &=C(h, \delta) (f(x_0)-f(x_N))
            \end{aligned}
        \end{equation*}
        Note that the sum of $N$ squared gradient norms appearing in the left-hand side is always greater than or equal to $N$ times its minimum term, i.e. we have 
         $\sum_{k=0}^{N-1} \|\nabla f(x_k)\|^2 \ge N \min_{k\in\{1,\cdots,N\}} \|\nabla f(x_k)\|^2$. Following the argument from the end of the proof of Theorem \ref{th::exact_convergence_rate} allows to replace $f(x_N)$ and $C_N(h,\delta)$ by $f(x_\ast)$ and $\tilde{C}_N(h,\delta)$, and yields the stated result.
\end{proof}
This upper bound can be visualized in Figure \ref{fig:lower_upper_several_steps} for different inexactness levels $\delta$ and different numbers of steps $N$, together with the lower bound from the next subsection and the exact value of the rate computed numerically by a PEP.

\subsection{Lower bound on the rate for several steps}
We now provide lower bounds on the convergence rate of inexact gradient descent, using explicit smooth convex functions whose convergence rates after $N$ steps are close to the upper bound derived above.

\begin{theorem}[Lower bound on the convergence rate for $N$ steps of inexact gradient descent on smooth convex functions]\label{theorem::lower_inexact_N_steps}
   When applying any number of steps $N \in \mathbb{N}$ of Algorithm \ref{algo::inexact_gradient} with an inexact gradient with relative inexactness $\delta\in(0,1)$ with a constant stepsize $h\in[0,\frac{2}{1+\delta}]$, there exists an $L$-smooth convex function $f$ and an initial point $x_0$ such that iterates satisfy
    \begin{equation*}
        \max\left\{\frac{1}{Nh(1-\delta)+\tfrac{1}{2}},2(1-h(1+\delta))^{-2N}\right\} (f(x_0)-f(x_\ast))\leq\frac{1}{L} \|\nabla f(x_N)\|^2
    \end{equation*}
\end{theorem}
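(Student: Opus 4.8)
The plan is to prove the lower bound by exhibiting, for each of the two terms in the maximum, an explicit univariate $L$-smooth convex function together with a sequence of admissible inexact directions for which the ratio $\tfrac{1}{L}\|\nabla f(x_N)\|^2/(f(x_0)-f(x_\ast))$ equals that term exactly. Since each term is matched exactly by its own construction, for any fixed triple $(N,h,\delta)$ the larger of the two terms is attained by the corresponding function, which yields the stated inequality involving the maximum. Both constructions reuse the worst-case functions of Theorem~\ref{th::tight_inexact} and exploit the reinterpretation observed in the remark following that theorem: choosing $d_k=(1\pm\delta)\nabla f(x_k)$ at every iteration (which saturates \eqref{ine} and is therefore admissible) turns relatively inexact gradient descent into \emph{exact} gradient descent run with the miscalibrated effective stepsize $h_{\mathrm{eff}}=h(1\pm\delta)$. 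This reduces the whole analysis to the tight exact-case worst cases underlying Theorem~\ref{theo::rotaru_exact}. I assume $L=1$ throughout by the usual homogeneity argument.

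For the first term, I would take the Huber function of the left regime of Theorem~\ref{th::tight_inexact}, now with slope $c=1/\sqrt{Nh(1-\delta)+\tfrac12}$ and kink at $x=c$, and select the undershooting direction $d_k=(1-\delta)\nabla f(x_k)$. In the linear region the gradient is constant and equal to $c$, so each step translates the iterate by the fixed amount $h(1-\delta)c$ towards the origin. Choosing $x_0=c\bigl(1+Nh(1-\delta)\bigr)$ makes the iterates decrease monotonically through the linear region and land exactly at the kink $x_N=c$, where by $C^1$-continuity $\nabla f(x_N)=c$. A direct computation then gives $f(x_0)-f(x_\ast)=c^2\bigl(Nh(1-\delta)+\tfrac12\bigr)$ and $\|\nabla f(x_N)\|^2=c^2$, so the ratio equals $\tfrac{1}{Nh(1-\delta)+\tfrac12}$. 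This matches the first term and, moreover, coincides with the left-regime upper bound of Theorem~\ref{theorem::full_th_inexact_N_steps}, thereby proving tightness throughout the left regime.

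For the second term, I would take the quadratic $f(x)=\tfrac12 x^2$ and the overshooting direction $d_k=(1+\delta)\nabla f(x_k)=(1+\delta)x_k$, again saturating \eqref{ine}. Each step becomes $x_{k+1}=(1-h(1+\delta))x_k$, hence $x_N=(1-h(1+\delta))^N x_0$. Since $f(x_0)-f(x_\ast)=\tfrac12 x_0^2$ and $\|\nabla f(x_N)\|^2=x_N^2$, the ratio equals $2\bigl(1-h(1+\delta)\bigr)^{2N}$, i.e.\ exactly the exact-case quadratic rate of Theorem~\ref{theo::rotaru_exact} with $h$ replaced by $h(1+\delta)$, which is the second term of the claimed maximum.

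The computations are elementary; the only point requiring care is the Huber construction, where one must verify that every intermediate iterate $x_1,\dots,x_{N-1}$ stays strictly inside the linear region (so the gradient remains exactly $c$ at each step) and that $x_N$ reaches the kink precisely, which is what fixes the choice of $x_0$ above. It is also worth noting that in both constructions the gradient norm is nonincreasing along the trajectory, so that $\|\nabla f(x_N)\|^2=\min_{k}\|\nabla f(x_k)\|^2$; this confirms that the lower bound is compatible with the minimum-based upper bound of Theorem~\ref{theorem::full_th_inexact_N_steps}.
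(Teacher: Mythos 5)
Your proof is correct and is essentially the paper's own argument: the paper's proof consists of exactly these two instances (the Huber function with slope $1/\sqrt{Nh(1-\delta)+\tfrac{1}{2}}$ and the quadratic $\tfrac{x^2}{2}$) combined with the saturated colinear directions $d_k=(1\mp\delta)\nabla f(x_k)$, invoking the one-step tightness argument of Theorem \ref{th::tight_inexact}; you have simply written out the details that the paper leaves implicit, including the check that the Huber iterates remain in the linear region. One remark: your quadratic computation yields the ratio $2(1-h(1+\delta))^{2N}$, whereas the theorem as printed displays $2(1-h(1+\delta))^{-2N}$; the exponent sign in the paper is a typo, since for $h$ strictly inside $\left(0,\tfrac{2}{1+\delta}\right)$ the negative-exponent quantity exceeds $2$ and would contradict the upper bounds of Theorems \ref{theorem::full_th_inexact_one_step} and \ref{theorem::full_th_inexact_N_steps}, and it is also inconsistent with the exact-case rate $2(1-h)^{2N}$ of Theorem \ref{theo::rotaru_exact}, so the positive-exponent form you derived is the intended statement and your proof establishes it.
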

\begin{proof}
Consider the following two $1$-smooth convex functions
\begin{equation*}
f_1(x) =
    \left\{
    \begin{array}{rl}
    \tfrac{1}{\sqrt{Nh(1-\delta)+\tfrac{1}{2}}} |x| - \tfrac{1}{2Nh(1-\delta)+\tfrac{1}{2}} ~~\text{, when } |x| \geq \tfrac{1}{\sqrt{Nh(1-\delta)+\tfrac{1}{2}}} \\
    \frac{x^2}{2}~~\text{, when } |x| < \tfrac{1}{\sqrt{Nh(1-\delta)+\tfrac{1}{2}}}
    \end{array}
    \right.
\end{equation*}
and $f_2(x)=\frac{x^2}{2}$, i.e. a Huber and a quadratic function. The stated result then follows using an argument similar to that in Theorem \ref{th::tight_inexact}, adapted to the $N$-steps case. %Note that these two functions are precisely those corresponding to the left and right regimes.
\end{proof}

The upper and lower bounds provided by Theorems \ref{theorem::full_th_inexact_N_steps} and \ref{theorem::lower_inexact_N_steps} are illustrated in Figure \ref{fig:lower_upper_several_steps} for several values of inexactness level $\delta$ and numbers of iterations $N$. The PEP-based numerical worst-case rates are plotted alongside the analytical bounds. %The parallel between upper, lower, and numerical results gives a comprehensive view of the behavior of the method depending on $\delta$, $N$, and $h$.
\begin{figure}[H]
    \centering
\begin{subfigure}{0.32\textwidth}
  \includegraphics[width=\linewidth]{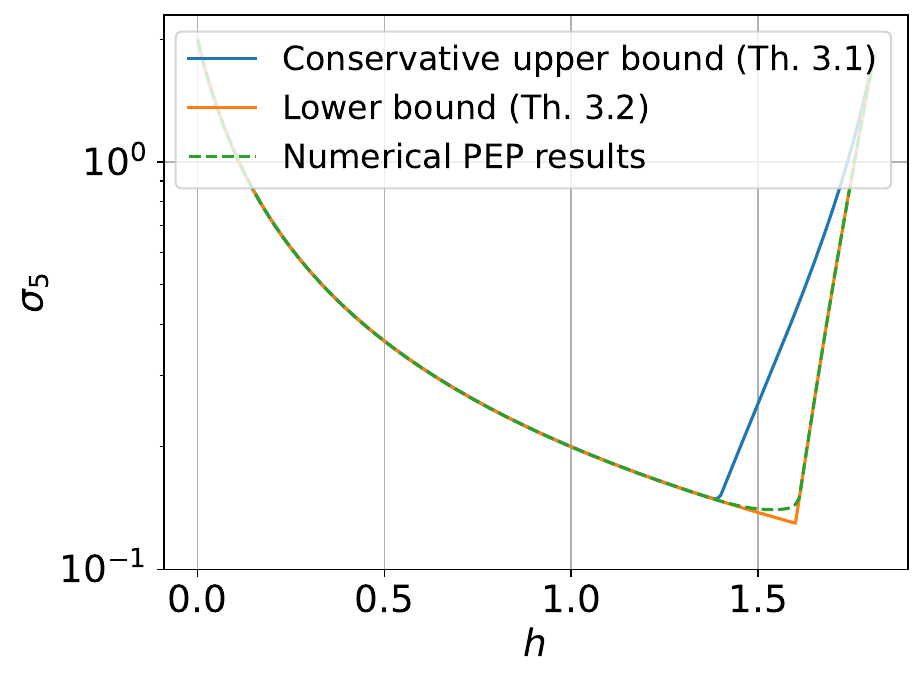}
  \caption{\centering$\delta=0.1$, $N=5$}
\end{subfigure}\hfil % 
\begin{subfigure}{0.32\textwidth}
  \includegraphics[width=\linewidth]{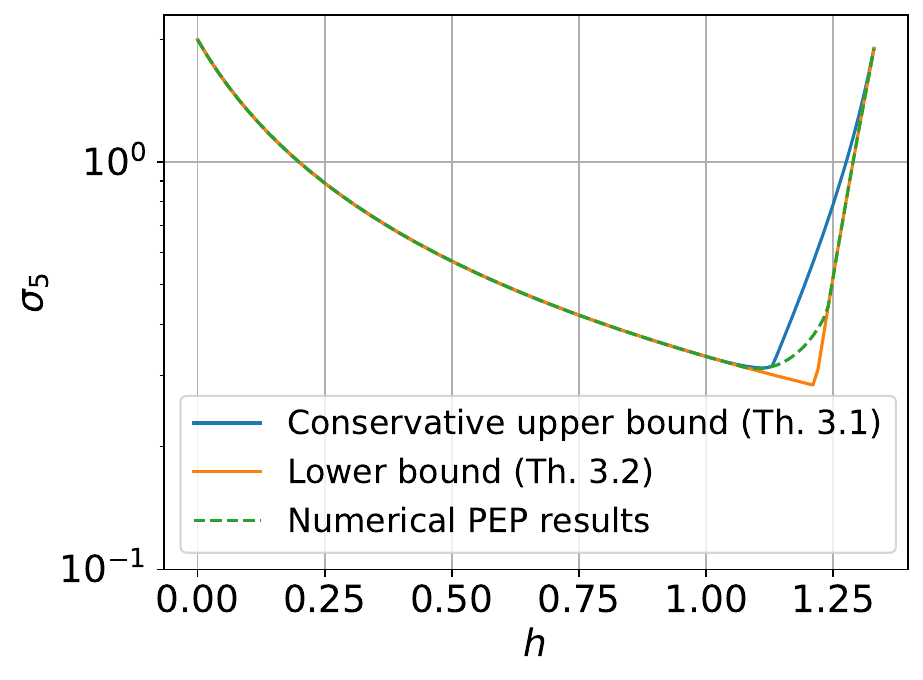}
  \caption{\centering$\delta=0.5$, $N=5$}
\end{subfigure}\hfil 
\begin{subfigure}{0.32\textwidth}
  \includegraphics[width=\linewidth]{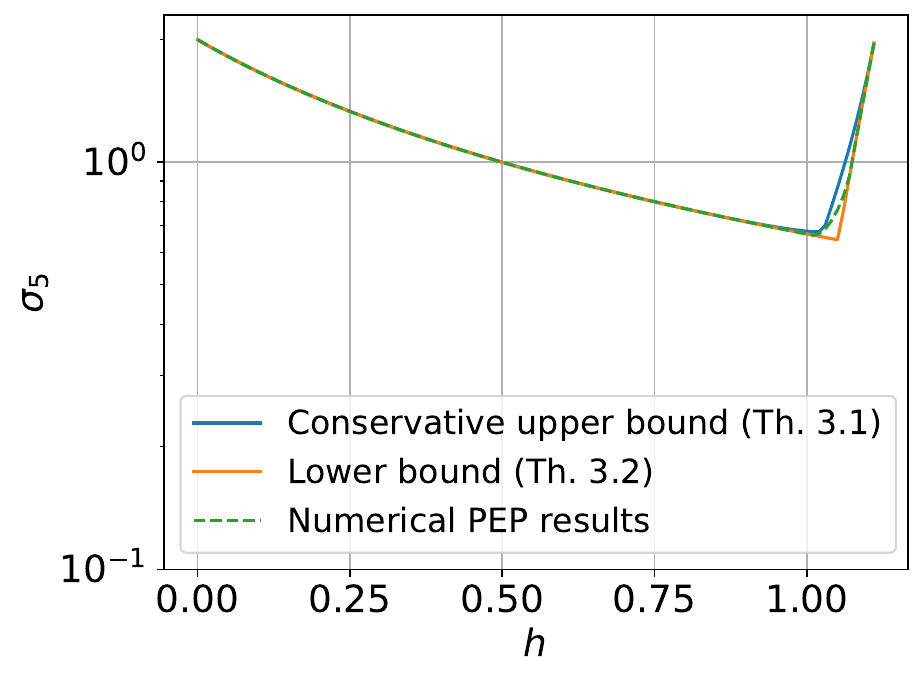}
  \caption{\centering$\delta=0.8$, $N=5$}
\end{subfigure}

\medskip
\begin{subfigure}{0.32\textwidth}
  \includegraphics[width=\linewidth]{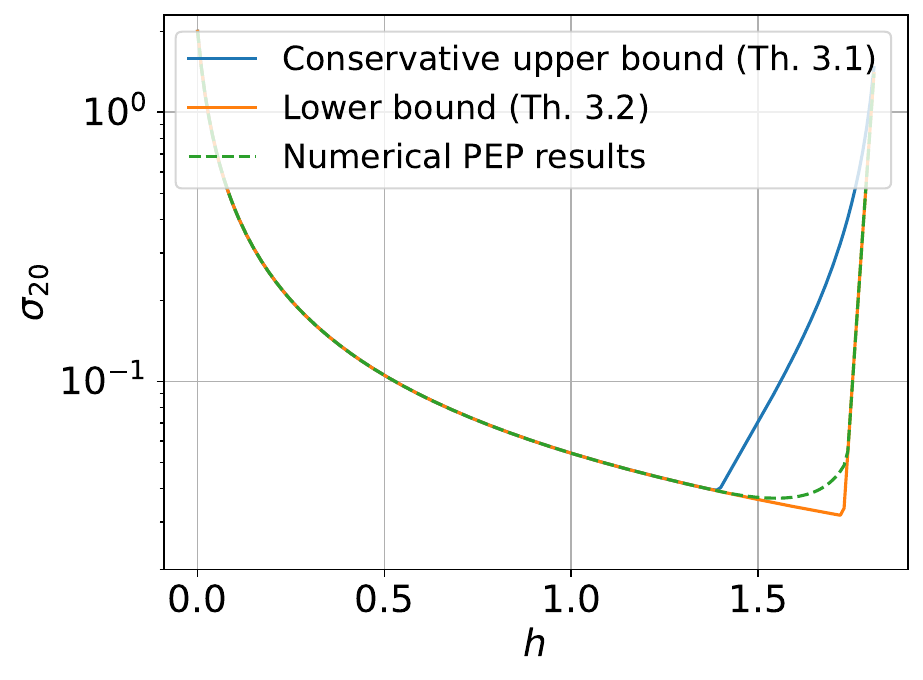}
  \caption{\centering$\delta=0.1$, $N=20$}
\end{subfigure}\hfil 
\begin{subfigure}{0.32\textwidth}
  \includegraphics[width=\linewidth]{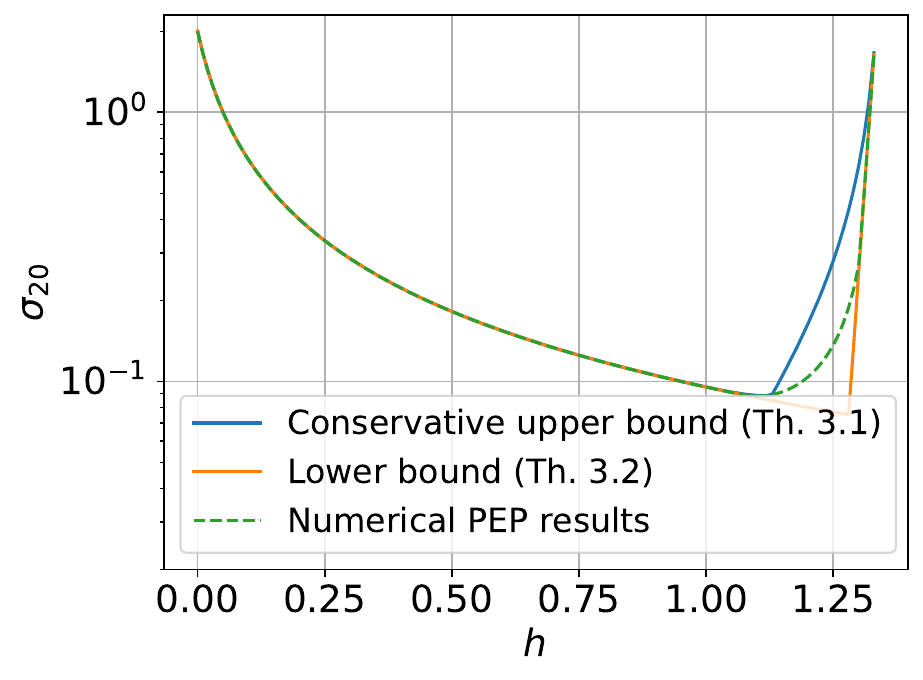}
  \caption{\centering$\delta=0.5$, $N=20$}
\end{subfigure}\hfil 
\begin{subfigure}{0.32\textwidth}
  \includegraphics[width=\linewidth]{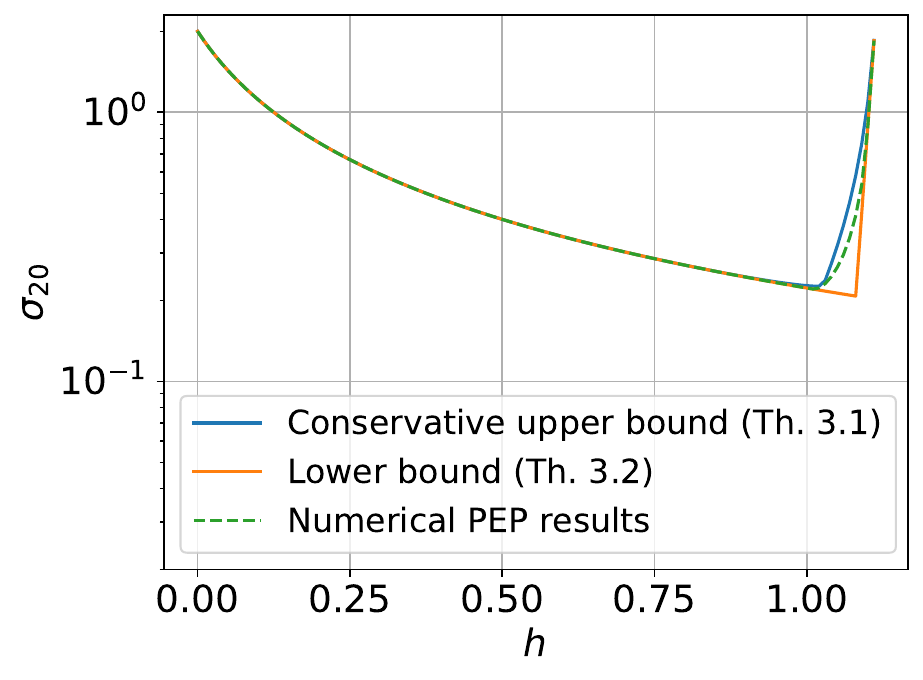}
  \caption{\centering$\delta=0.8$, $N=20$}
\end{subfigure}
\caption{\centering Lower and upper bounds and exact value of the convergence rate for different inexactness levels $\delta$ and different number of steps $N$.}
\label{fig:lower_upper_several_steps}
\end{figure}

In the left regime, upper and lower bounds coincide exactly, thereby establishing the tightness of the convergence rate in this case. In contrast, the bounds do not match in the other two regimes; however, they still offer valuable insights into the worst-case behavior of inexact gradient descent when multiple iterations are performed. In particular, we can make an important remark that partially tightens the convergence rate in a subinterval of the right regime.

\begin{remark}
    In the right stepsize regime \( h \in \left]\frac{3\delta+2-\sqrt{4-3\delta^2}}{2\delta(\delta+1)}, \frac{2}{1+\delta}\right[ \), the upper bound
    \[
        \frac{2}{N((1-h(1+\delta))^{-2})-(N-1)}
    \]
    is valid. Moreover, it can be shown that a sharper upper bound, matching the lower bound
    \[
        2(1 - h(1+\delta))^{-2N},
    \]
    also holds for all \( h \geq \tilde{h}(N) \), for some threshold \( \tilde{h}(N) \) belonging to the right regime. This leads to a tight convergence rate for several steps in the subinterval \( [\tilde{h}(N), \frac{2}{1+\delta}) \).
        However, the threshold \( \tilde{h} \) is defined implicitly as the root of a polynomial equation whose degree increases with $N$, making it difficult to express in closed form.  

    As an illustration, Figure \ref{fig:tight_bound_transition} shows how the numerically determined value of \( \tilde{h}(N) \) evolves with \( N \) for a few values of $\delta$.  The Python code used for these experiments, based on the \texttt{SymPy} package but used here purely for numerical purposes, is available online.
    
    \begin{figure}[H]
    \centering
    \includegraphics[width=0.5\textwidth]{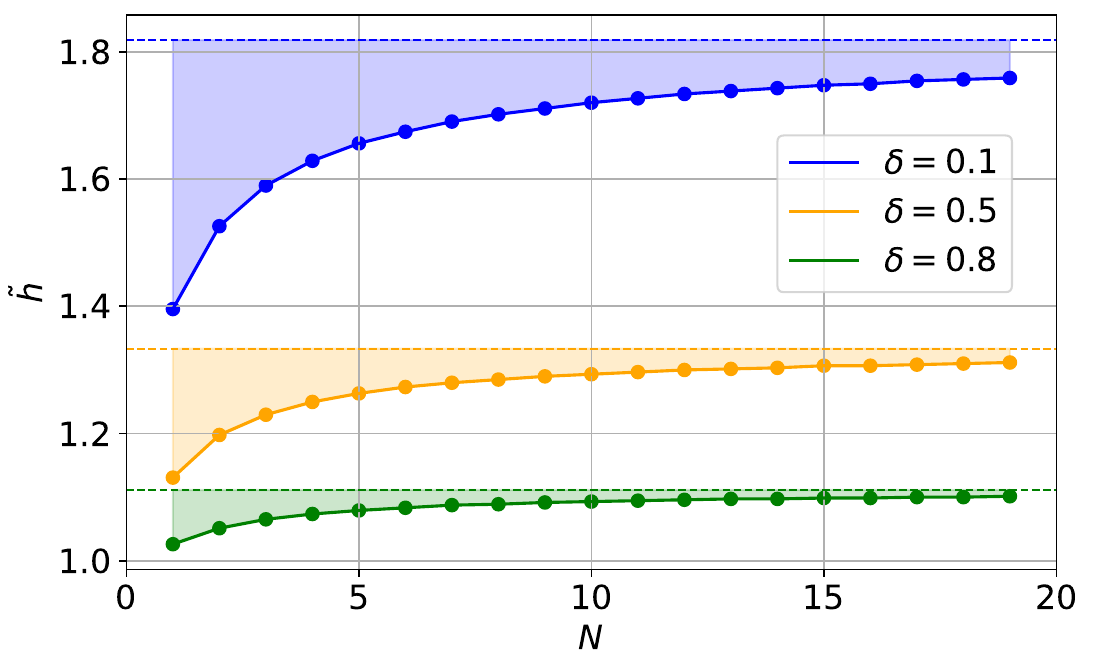}
    \caption{\centering Threshold $\tilde{h}(N)$  as a function of $N$ for different values of $\delta$ (dots). Dashed lines indicate the maximum stepsize $2/(1+\delta)$, and the shaded regions highlight the interval between $\tilde{h}$ and this threshold.}
    \label{fig:tight_bound_transition}
\end{figure}

\end{remark}

\subsection{Approximation of the optimal stepsize}

We now investigate the optimal stepsize $h_{\text{opt}}(N,\delta)$, defined as the value of $h$ that minimizes the (numerical) exact PEP worst-case bound for a given $\delta$ and $N$. Figure \ref{fig::best_h_N} plots the numerical value of $h_{\text{opt}}(N,\delta)$ (solid lines), and the end of the one-step intermediate regime in the inexact case (dashed lines, see Theorem \ref{theorem::full_th_inexact_one_step}).
\begin{figure}[H]
    \centering
    \includegraphics[width=0.6\linewidth]{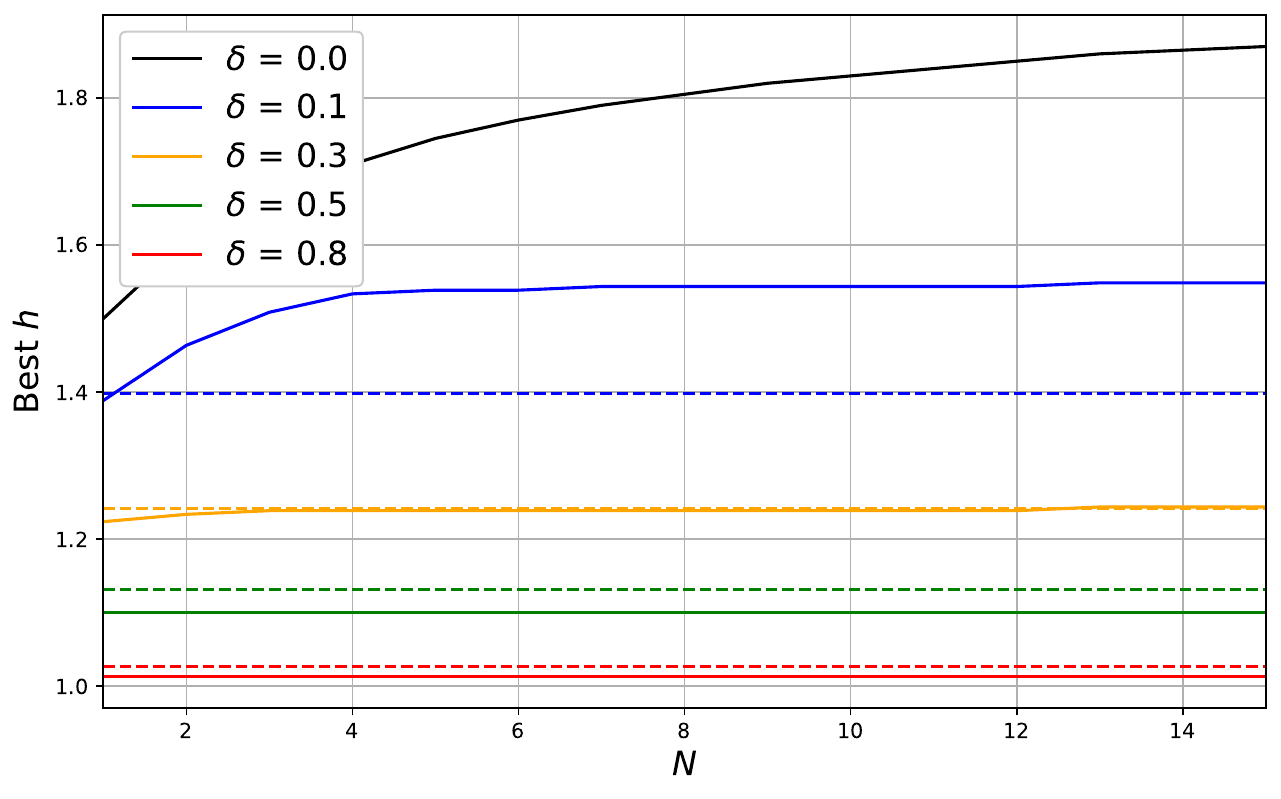}
    \caption{\centering Numerical $h_{\text{opt}}$ depending on the number of iterations $N$ for different levels of inexactness $\delta$ (solid lines) and corresponding end of 1-step intermediate regimes (dashed lines).}
    \label{fig::best_h_N}
\end{figure}
We observe two distinct behaviors depending on the inexactness level $\delta$:
\begin{itemize}
    \item For small values of $\delta$, the optimal stepsize increases with $N$, much like in the exact case ($\delta=0$), in which it tends toward $2$.
    \item For moderate to large values of $\delta$, the optimal stepsize stabilizes, becoming nearly independent of $N$. This indicates a transition to a regime where increasing the number of steps does not justify taking larger stepsizes.
\end{itemize}

Figure \ref{fig::best_rate_vs_end_middle} confirms that choosing as stepsize the right end of the intermediate regime, namely $h=\frac{3\delta+2-\sqrt{4-3\delta^2}}{2\delta(\delta+1)}$,  yields a convergence rate extremely close to the optimal one (at $h_\text{opt}$) for a wide range of values of $N$, especially when $\delta \geq 0.2$. This simple formula, independent from the number of iterations $N$, is thus a highly effective and practical stepsize choice in the inexact regime.

\begin{figure}[H]
    \centering
    \includegraphics[width=0.6\linewidth]{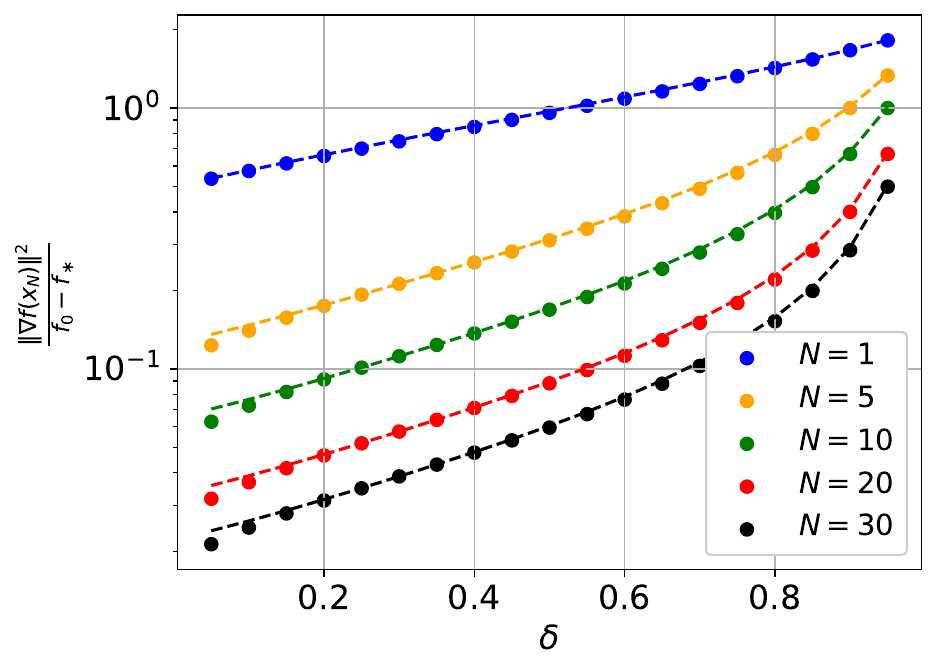}
    \caption{\centering Numerical rate at $h_{\text{opt}}$ (solid dots) and at its approximation by $\frac{3\delta+2-\sqrt{4-3\delta^2}}{2\delta(\delta+1)}$ (dashed lines) for different number of iterations.}
    \label{fig::best_rate_vs_end_middle}
\end{figure}

This sharply contrasts with the exact case, where stepsize $h$ must grow with $N$ to maintain optimality. In the relatively inexact setting, large values of the stepsize can lead to excessive amplification of errors, which explains the stabilization of $h_{\text{opt}}$ as $N$ increases.

\section{Conclusion}\label{sec::conclusion}
In this work, we conducted a detailed analysis of the constant stepsize gradient descent method under relative inexactness. Our contributions are twofold.

First, in Section \ref{sec::one_step}, we established a tight theoretical characterization of the one-step behavior of relatively inexact gradient descent on smooth convex functions. We derived an explicit worst-case convergence rate $\sigma_1$, composed of three different stepsize regimes, and proved its tightness, either analytically or numerically. We observed that inexactness does not significantly deteriorate the convergence rate, but noted that smaller stepsizes should be used compared to the exact case. We also discussed that characterizing the optimal stepsize analytically is hard, due to difficult symbolic computations.

Secondly, in Section \ref{sec::several_steps}, we extended the analysis to multiple iterations, providing both upper and lower bounds on the worst-case convergence rate. This was achieved through a combination of analytical reasoning and computer-aided proofs using the Performance Estimation Problem (PEP) framework. The provided lower and upper bounds were often close to each other, providing insight about the method's behavior when performing several iterations. We also observed that the best stepsize according to the obtained worst-case convergence rates is not very sensitive to the number of iterations, highlighting a significant difference with the exact case. We also gave a simple analytical approximation of the optimal stepsize that can deliver near-optimal performance across any number of iterations.

Beyond these results, our study highlights the intrinsic complexity of analyzing the exact worst-case behavior of relatively inexact methods. In particular, the intermediate regime involves solving a high-degree polynomial system, whose structure resists symbolic simplification. This suggests that the difficulty is not only technical but may reflect a deeper hardness of the underlying worst-case analysis. However, it is also possible that this hardness results from some choices made in the setup of our analysis (such as the performance criteria used to measure convergence). Nevertheless, such challenges underscore the importance of combining symbolic analysis with numerical and computer-assisted tools to gain insights into complex inexact optimization settings.

Looking ahead, several promising research directions emerge. First, our numerical results suggest that the worst-case error vector becomes orthogonal to the true gradient precisely when the stepsize is optimally tuned. Formalizing this observation and understanding how this orthogonality condition depends on the inexactness level $\delta$ remains an open theoretical challenge. Second, our analysis reveals a shift in optimal algorithm design under relative gradient noise in the smooth convex setting: taking many small steps is preferable to using large steps. Extending this insight to the stochastic case, where inexactness arises from sampling noise, is a natural and important next step. In particular, it would be valuable to investigate whether similar stepsize tuning principles apply when stochasticity and relative inexactness interact. Initial progress in this direction has been made recently by using the PEP methodology to analyze stochastic gradient descent without variance assumptions \cite{cortild2025new}. \rev{This analysis has been expanded even more recently in \cite{rubbens2025computer}, but the full picture remains to be explored.}

\clearpage
\bibliography{my_biblio}

\end{document}